\documentclass[reqno]{amsart}

\usepackage[ngerman,english]{babel}
\usepackage{amssymb}
\usepackage{amsmath}
\usepackage{amsthm}
\usepackage[inline]{enumitem}
\usepackage{tikz}
\usepackage{mathtools}
\usepackage{hyperref}
\usepackage{cleveref}

\newtheorem{theorem}{Theorem}

\newtheorem{lemma}{Lemma}
\newtheorem*{lemma*}{Lemma}
\theoremstyle{definition}
\newtheorem{definition}{Definition}

\theoremstyle{remark}

\newcommand{\set}[2]{\{\, #1 \mid #2 \,\}}
\newcommand{\fg}{\mathrm{FG}}
\newcommand{\mg}{\mathbb{G}}
\newcommand{\mh}{\mathbb{H}}
\newcommand{\id}{\textrm{id}}
\newcommand{\src}{\mathbf{s}}
\newcommand{\trgt}{\mathbf{t}}

\tikzset{%
  dot/.style={circle,draw,fill=black,inner sep=0pt, minimum width=3pt},
  bigdot/.style={circle,draw,fill=black,inner sep=0pt, minimum width=5pt},
  shorter/.style={shorten >=1mm,shorten <=1mm},
  edge/.style={shorter,thick},
}

\title{Finite F-inverse covers do exist}
\author{Julian Bitterlich  \\
Technische Universit\"at Darmstadt
}
\date{\today}

\begin{document}

\begin{abstract}
We show that every finite inverse monoid
has an idempotent-separating cover
by a finite F-inverse monoid.
This provides a positive answer to a conjecture
of Henckell and Rhodes \cite{HenckellRhodes1991}.
\end{abstract}
\maketitle

\section{Intorduction}
An important theorem
in the area of inverse monoids
is \emph{McAlister's Covering Theorem} \cite{McAlister1974A,Lawson1998}.
\begin{theorem}\label{theorem of mcalister}
For every inverse monoid $M$ there is an
E-unitary inverse monoid $N$ and an
idempotent-separating
cover $\theta \colon N \to M$.
\end{theorem}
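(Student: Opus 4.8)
The plan is to build, directly from the data of $M$, an E-unitary inverse monoid $N$ together with a surjective idempotent-separating homomorphism $\theta\colon N\to M$. I would organize everything around the maximal group image: let $\sigma\colon M\to G$ be the canonical surjection onto the greatest group homomorphic image $G=M/\sigma$, and recall that $\sigma$ collapses the whole semilattice $E=E(M)$ of idempotents to $1\in G$. The guiding principle is that $E$-unitarity is cheap if one realizes $N$ inside a semidirect product of a semilattice by a group, or equivalently as a \emph{McAlister $P$-semigroup} $P(\Gamma,\mathcal X,\mathcal Y)$. Such a semigroup is automatically E-unitary: its idempotents are exactly the pairs $(A,1)$, the projection $(A,\gamma)\mapsto\gamma$ is its maximal group image, and since this projection is idempotent-pure, nothing nontrivial can lie above an idempotent. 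So the entire problem is pushed into producing the right semilattice and the right group acting on it.

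The naive attempt — making $G$ act on $E$ by $g\cdot e=m e m^{-1}$ for some $m$ with $\sigma(m)=g$ — fails on two counts, and understanding both is what dictates the construction. First, this assignment depends on the choice of $m$, not merely on $g$, and it is only partially defined (it makes sense for $e\le m^{-1}m$); what $M$ genuinely supplies is a \emph{partial} action, namely the system of partial bijections $\alpha_m\colon e\mapsto mem^{-1}$ of $E$ indexed by the elements of $M$. Second, even a cardinality count shows that taking the group to be $G$ with semilattice $E$ cannot work, since $P(G,\mathcal X,E)$ has at most $|E|\cdot|G|$ elements while $\theta$ must be onto $M$, and a single triple $(mm^{-1},\sigma(m),m^{-1}m)$ need not determine $m$ when $M$ is far from E-unitary. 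Both problems are cured by using a \emph{large} group: I would fix a generating set $X$ of $M$ with a chosen map $X\to M$, take the free group $\fg(X)$, and let the generators act on $E$ through the $\alpha_{\bar x}$, thereby obtaining a partial action of $\fg(X)$ on the semilattice $E$.

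Next I would \emph{globalize} this partial action: a partial action of a group on a semilattice extends canonically to a genuine, order-preserving action of $\fg(X)$ on a larger semilattice $\mathcal X\supseteq E$ in which $E$ sits as an order ideal and $\mathcal X=\bigcup_{w}w\cdot E$. With this in hand I would form the $P$-semigroup
\[
  N=\set{(e,w)\in E\times\fg(X)}{w^{-1}\!\cdot e\in E},
\]
with $w^{-1}\!\cdot e$ computed in $\mathcal X$, and define $\theta\colon N\to M$ by $\theta(e,w)=e\,\overline{w}$, where $\overline{w}\in M$ is the image of the word $w$ read with inverse-semigroup inverses. Then I would verify the three required properties: $\theta$ is onto because any $m=\bar x_1^{\varepsilon_1}\cdots\bar x_n^{\varepsilon_n}$ is hit by $(mm^{-1},w)$; $\theta$ is idempotent-separating because the idempotents of $N$ are precisely the $(e,1)$ with $\theta(e,1)=e$, giving $E(N)\cong E(M)$; and $N$ is E-unitary by the general fact about $P$-semigroups quoted above.

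The step I expect to be the real obstacle is the globalization together with the multiplicativity of $\theta$. Constructing $\mathcal X$ so that it is genuinely a meet-semilattice, so that $\fg(X)$ acts by order automorphisms, and — most delicately — so that $E$ embeds as an order ideal with \emph{no new idempotents created}, is exactly what guarantees idempotent-separation rather than mere surjectivity. Dually, one must confirm that $\theta$ respects products, which reduces to checking that the globalized meets $e\wedge w\cdot f$ in $\mathcal X$ are compatible with the multiplication $e\,\overline{w}$ in $M$; this is where the defining relations of $\mathcal X$ must be neither too coarse (collapsing idempotents and breaking idempotent-separation) nor too fine (breaking multiplicativity). Everything else — E-unitarity and the surjectivity count — is then routine.
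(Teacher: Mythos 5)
Your proposal is correct in outline, but it takes a genuinely different --- and considerably heavier --- route than the paper. You follow the classical McAlister/Kellendonk--Lawson path: extract the partial action of $\fg(X)$ on $E(M)$ by conjugation $e\mapsto mem^{-1}$, globalize it to an honest action on a poset $\mathcal{X}\supseteq E(M)$ containing $E(M)$ as an order ideal, and realize the cover as the $P$-semigroup $P(\fg(X),\mathcal{X},E(M))$. This does work: E-unitarity is automatic for $P$-semigroups, the idempotents are exactly the pairs $(e,1)$ so $\theta$ separates them, and the multiplicativity of $\theta(e,w)=e\overline{w}$, which you rightly single out, does go through --- setting $g=e\,\overline{v}f\,\overline{v}^{-1}$ one checks $g\,\overline{v}\,\overline{w}=g\,\overline{vw}$ using $f\le\overline{w}\,\overline{w}^{-1}$, which holds precisely because $(f,w)\in N$. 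The one step you cannot avoid importing is the globalization theorem for partial actions (or, equivalently, a direct construction of the McAlister triple), which is a genuine piece of machinery. The paper sidesteps all of it: it takes $N=M\times_P\fg(P)$, the inverse submonoid of $M\times\fg(P)$ generated by the pairs $(p^M,p^{\fg(P)})$, and observes (\Cref{p-product is idempotent-separating}, \Cref{p-product works if group is good}) that the first projection is an idempotent-separating cover and that $N$ is E-unitary --- indeed F-inverse --- because the free group is (strongly) compatible with $M$: $u^{\fg(P)}=1$ forces $u$ to reduce to the empty word, whence $u^M\le 1$. What your approach buys is the structural description of the cover as a $P$-semigroup, essentially bundling the $P$-theorem into the covering theorem; what the paper's approach buys is a five-line verification and, more importantly for its main result, a template in which $\fg(P)$ can later be replaced by a \emph{finite} (strongly) compatible group to obtain finite covers --- a substitution that has no obvious analogue in your construction, since $\fg(X)$ enters it through the globalized action rather than as a plug-in parameter.
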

Typically, the statement of McAlister's Covering Theorem
is just given as: ``every inverse monoid has an E-unitary cover.''

The proof of McAlister's Covering Theorem can be adapted
to finite inverse monoids as well.
This finite version reads in short:
``every finite inverse monoid has a finite E-unitary cover.''
\begin{theorem}\label{finite macalister}
For every finite inverse monoid $M$ there is a
finite, E-unitary inverse monoid $N$ and an
idempotent-separating
cover $\theta \colon N \to M$.
\end{theorem}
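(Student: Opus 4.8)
The plan is to realize $M$ as an idempotent-separating image of a P-semigroup, following the McAlister covering construction but arranging all of the data to be finite. Write $G = M/\sigma$ for the maximal group image, with projection $\sigma^{\natural}\colon M\to G$, and $E=E(M)$ for the (finite) semilattice of idempotents. Since the cover $N$ we seek must satisfy $E(N)\cong E$ and be E-unitary, I would build it as a P-semigroup $P(H,\mathcal X,\mathcal Y)$ with $\mathcal Y = E$: such a semigroup is automatically E-unitary with $E(P(H,\mathcal X,\mathcal Y))\cong\mathcal Y$, so the induced map onto $M$ is idempotent-separating for free, provided $\mathcal Y$ is mapped identically onto $E(M)$. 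One point to settle at the outset is that the group $H$ cannot be taken to be $G$ itself: if $M$ is not already E-unitary there is a non-idempotent $m$ with $\sigma^{\natural}(m)=1$, and any idempotent-separating cover must send a non-idempotent to it, forcing the group image $N/\sigma$ to surject onto $G$ with nontrivial kernel. So part of the task is to produce a suitable finite group $H$ mapping onto $G$.

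To obtain the triple I would globalize the natural partial action of $M$ on $E$. Fix a finite generating set $A=A^{-1}$ of $M$. Conjugation $e\mapsto s e s^{-1}$ (defined on $\{e\le s^{-1}s\}$) sends each $s\in M$ to a partial bijection of $E$, giving a map into $I_E$; in particular the generators yield partial bijections $\alpha_a$ with $\alpha_{a^{-1}}=\alpha_a^{-1}$. This data determines a partial action of the free group $\fg(A)$ on the finite semilattice $E$. Its enveloping action lives on a semilattice $\mathcal X\supseteq E$ in which $E$ sits as an order ideal with $H\cdot E=\mathcal X$, and I would then define the covering map $P(H,\mathcal X,E)\to M$ by sending $(e,h)$ to $e$ times the $M$-value of a word representing $h$. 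Checking that this is a well-defined surjective homomorphism, and idempotent-separating because it restricts to the identity $E\to E(M)$, is the routine part.

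The hard part, and the whole content of the finite version, is finiteness. Taken over the free group the globalization is infinite, so I would not use $\fg(A)$ but a finite group $H$ through which the construction factors: concretely, replace $\fg(A)$ by a finite quotient large enough that the partial action descends to $H$ and that $H$ still surjects onto $G$, and take $\mathcal X$ to be the sub-$H$-semilattice generated by the $H$-translates of $E$, which is finite once $H$ and $E$ are. The delicate step is exhibiting such a finite $H$: one must show that, because $E$ is finite and the generators act through the finite inverse monoid $\langle\alpha_a\rangle\le I_E$, only finitely much of the free action is relevant, so that a finite $H$ can carry the globalization without destroying either surjectivity onto $M$ or the E-unitary property. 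This is precisely where finiteness of $M$ (hence of $E$ and $G$) is used, and it is the only place where the argument genuinely goes beyond \Cref{theorem of mcalister}.
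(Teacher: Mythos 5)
Your framework (globalizing the Wagner--Preston/conjugation partial action to a McAlister triple $P(H,\mathcal X,E)$) is a legitimate route, and your observation that $H$ cannot simply be the maximal group image $G=M/\sigma$ is correct. But the proof has a genuine gap exactly where you yourself locate it: the finite group $H$ is never constructed. You reduce the theorem to ``replace $\fg(A)$ by a finite quotient large enough that the partial action descends to $H$ and that $H$ still surjects onto $G$,'' and then assert that ``only finitely much of the free action is relevant'' -- but no such quotient is exhibited and no argument is given that one exists. Passing to the finite quotient $\langle\alpha_a\rangle\le I_E$ does not work by itself, since that is an inverse monoid, not a group, and an arbitrary finite group quotient of $\fg(A)$ through which the $\alpha_a$ ``factor'' could identify a word having non-idempotent $M$-value with the identity, which would destroy the E-unitary property of the resulting $P$-semigroup (equivalently, would make the induced map fail to be idempotent-separating). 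Since, as you say, this is ``the whole content of the finite version,'' the proposal does not yet prove the theorem.

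The missing step has a short solution, which is what the paper does: embed $M$ into $I(X)$ for a finite set $X$ by Wagner--Preston, extend each generator $p^M$ (a partial bijection of $X$) to a total permutation $p^G$ of $X$, and let $H$ be the subgroup of $\mathrm{Sym}(X)$ generated by these. Then whenever a word $u$ over the generators satisfies $u^H=\mathrm{id}_X$, the partial bijection $u^M$ is a restriction of $u^H$, hence a restriction of the identity, hence idempotent; this is precisely the compatibility condition your construction needs ($u^H=1\Rightarrow u^M\le 1$), and it keeps the kernel of $\fg(A)\to H$ inside the idempotents so that E-unitarity and idempotent-separation survive. With that group in hand your globalization would be finite and the rest of your argument is indeed routine. (The paper then sidesteps the $P$-semigroup machinery entirely by taking the fibered product $M\times_P H$ of generated structures, which is E-unitary and projects onto $M$ idempotent-separatingly; this is a more economical packaging of the same idea.)
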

Actually, in \cite{Lawson1998}
the classical version and the finite version
of McAlister's Covering Theorem
are proved as a single statement.

A strengthening of McAlister's Covering Theorem
is Lawson's Covering Theorem \cite{Lawson1998}.
It shows the existence of F-inverse covers instead
of `just' E-unitary covers.
\begin{theorem}\label{theorem of lawson}
For every inverse monoid $M$ there is an
F-inverse monoid $N$ and 
an idempotent-separating
cover $\theta \colon N \to M$.
\end{theorem}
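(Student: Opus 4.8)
The plan is to build the cover by hand as an inverse submonoid of a product $H \times M$, where $H$ is a suitably large group mapping onto $G := M/\sigma$. The conceptual point is that an idempotent-separating cover must induce a bijection on idempotents, so the covering monoid is forced to carry the same semilattice $E(M)$; we therefore cannot enlarge $E(M)$ to manufacture maxima, and the only remaining freedom is to enlarge the group. I would look for a group $H$ with a surjection $q \colon H \to G$ together with a \emph{dual prehomomorphism} $\psi \colon H \to M$ — a map with $\psi(1) = 1$, $\psi(h^{-1}) = \psi(h)^{-1}$, and $\psi(h)\psi(h') \le \psi(hh')$ for the natural partial order — satisfying $\pi \circ \psi = q$ (where $\pi \colon M \to G$ is the canonical map) and, crucially, the covering condition that every $m \in M$ lies below some $\psi(h)$. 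Given such data I would set
\[ N = \set{(h,m) \in H \times M}{m \le \psi(h)} \]
with componentwise multiplication.

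Then I would check the two halves of the statement. First, $N$ is an inverse submonoid: it contains $(1,1)$; the product $(h,m)(h',m') = (hh', mm')$ stays in $N$ because $mm' \le \psi(h)\psi(h') \le \psi(hh')$; and it is closed under inverses since $m \le \psi(h)$ gives $m^{-1} \le \psi(h)^{-1} = \psi(h^{-1})$. The first projection $p \colon N \to H$ is an onto homomorphism to a group, so the minimum group congruence $\sigma_N$ is contained in $\ker p$; hence every $\sigma_N$-class sits inside one fibre $p^{-1}(h) = \set{(h,m)}{m \le \psi(h)}$. A short computation with the natural order shows that, for a fixed $h$, one has $(h,m) \le (h,m')$ iff $m \le m'$, so this fibre has greatest element $(h,\psi(h))$; every element of the fibre lies below it and is therefore $\sigma_N$-related to it, so the fibres are exactly the $\sigma_N$-classes. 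Each has a maximum, so $N$ is F-inverse. The second projection $\theta \colon N \to M$, $(h,m) \mapsto m$, is a homomorphism; it is idempotent-separating because the idempotents of $N$ are precisely the $(1,e)$ with $e \in E(M)$ and $\theta$ maps them bijectively onto $E(M)$; and it is onto exactly by the covering condition on $\psi$. Thus $\theta$ is the desired cover.

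The real work — and the main obstacle — is producing $H$ and $\psi$, and here the freedom to enlarge the group is exactly what saves us. I would take $H = \fg(M)$, the free group on the underlying set of $M$, with $q$ determined by sending the generator $x_m$ to $\pi(m)$. For a reduced word $w = x_{m_1}^{\varepsilon_1} \cdots x_{m_k}^{\varepsilon_k}$ I would define $\psi(w) = m_1^{\varepsilon_1} \cdots m_k^{\varepsilon_k}$, the corresponding product in $M$ (with $\varepsilon_i = \pm 1$ and inverses taken in $M$). The covering condition is then immediate, since $m = \psi(x_m)$ for every $m$; the properties $\psi(1)=1$ and $\psi(w^{-1}) = \psi(w)^{-1}$ are clear, and $\pi\circ\psi = q$ holds letter by letter. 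The one genuinely nontrivial point is the inequality $\psi(w)\psi(w') \le \psi(ww')$: writing $w = uc$ and $w' = c^{-1}v$ with $c$ the maximal cancelled block, so that $ww' = uv$ is reduced, one obtains $\psi(w)\psi(w') = \psi(u)\bigl(\psi(c)\psi(c)^{-1}\bigr)\psi(v)$, and since $\psi(c)\psi(c)^{-1}$ is an idempotent below $1$ and multiplication is monotone for the natural order, this is $\le \psi(u)\psi(v) = \psi(ww')$. Verifying this cancellation computation — together with the claim that $uv$ really is reduced once $c$ is maximal — is where I expect to spend the most care.

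I would close by observing that this argument is self-contained and does not in fact invoke \Cref{theorem of mcalister} or \Cref{finite macalister}; one could first replace $M$ by an E-unitary cover, but the free-group construction applies to any $M$ directly. It is worth stressing what goes wrong if one insists on keeping the group $G = M/\sigma$ fixed: then $\psi(g)$ would be forced to be the maximum of the $\sigma$-class $\pi^{-1}(g)$, which exists only when $M$ is already F-inverse. Passing to $\fg(M)$ breaks this obstruction precisely because many words map to the same element of $G$, and this is the essential idea of the proof.
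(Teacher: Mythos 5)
Your proof is correct, and it arrives at essentially the same object as the paper by a differently packaged route. The paper fixes a generating set $P$ of $M$, takes the free group $\fg(P)$, and forms the submonoid $M \times_P \fg(P)$ of the direct product generated by the diagonal pairs $(p^M,p^{\fg(P)})$; F-inverseness is deduced from the abstract notion of a \emph{strongly compatible} group (\Cref{p-product works if group is good}(ii)), and the fact that $\fg(P)$ is strongly compatible with every $P$-generated inverse monoid is left as an unproved ``Note that\dots''. Your maximal-cancellation computation showing $\psi(w)\psi(w')\le\psi(ww')$ is exactly the content of that remark, so you supply a detail the paper elides. Your $N=\set{(h,m)}{m\le\psi(h)}$ is a priori larger than the generated sub-product, but for $P=M$ the two coincide: given $m\le\psi(h)$, write $m=e\,\psi(h)$ with $e\in E(M)$; since $e=(x_ex_e^{-1})^M$ and $(x_ex_e^{-1})^{\fg(M)}=1$, the pair $(h,m)$ is the image of the word $x_ex_e^{-1}a$ with $a$ the reduced word for $h$. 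What each packaging buys: your dual-prehomomorphism formulation is self-contained and makes the $\sigma$-classes and their maxima completely explicit (the fibres of the projection to $H$, topped by $(h,\psi(h))$), but it is wedded to the free group; the paper's formulation isolates the property of the group that is actually used, which is precisely what lets the free group be replaced by a finite strongly compatible group in the finite version of the theorem --- the real point of the paper. All your individual steps check out (monotonicity of multiplication and inversion for the natural order, $\sigma_N\subseteq\ker p$ because $p$ maps onto a group, a common upper bound within a fibre forcing $\sigma$-equivalence), and your opening observation that idempotent separation freezes the semilattice and leaves only the group to enlarge is a good motivation for both constructions.
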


In the sequel we show
that the finite version
of Lawson's Covering Theorem
is also valid.
\begin{theorem}\label{finite lawson}
For every finite inverse monoid $M$ there is a
finite F-inverse monoid $N$ and an idempotent-separating
cover $\theta \colon N \to M$.
\end{theorem}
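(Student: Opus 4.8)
The plan is to reduce the theorem to a single group-theoretic statement and then to concentrate all the effort on constructing one finite group. Recall that for any group $\mg$ a map $\psi \colon \mg \to M$ that is a \emph{dual prehomomorphism} — meaning $\psi(1) = 1_M$, $\psi(g^{-1}) = \psi(g)^{-1}$, and $\psi(g)\psi(h) \le \psi(gh)$ for all $g,h \in \mg$ in the natural partial order — gives rise to an inverse monoid
\[
  N = \set{(m,g) \in M \times \mg}{m \le \psi(g)},
\]
with componentwise operations and $(m,g)^{-1} = (m^{-1}, g^{-1})$. First I would verify the easy structural facts. Since $\psi(1)=1_M$, the idempotents of $N$ are exactly the $(e,1)$ with $e \in E(M)$, so the projection $\theta(m,g) = m$ restricts to an isomorphism $E(N) \to E(M)$; the projection $(m,g) \mapsto g$ realises $\mg$ as a group quotient of $N$; and since any two elements $(m,g),(n,g)$ of a fixed fibre admit the common lower bound $(mm^{-1}nn^{-1}\psi(g),\, g)$, each fibre is a single $\sigma$-class with top element $(\psi(g), g)$. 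Hence $N$ is F-inverse. Provided $\psi$ satisfies the \emph{covering condition} that every $m \in M$ lies below some $\psi(g)$, the homomorphism $\theta$ is surjective, and it is an idempotent-separating cover. So the entire theorem follows from the lemma below.

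\begin{lemma*}
For every finite inverse monoid $M$ there exist a finite group $\mg$ and a dual prehomomorphism $\psi \colon \mg \to M$ such that every element of $M$ lies below some $\psi(g)$.
\end{lemma*}

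Before attacking finiteness I would record why the lemma holds without it, which already reproves \Cref{theorem of lawson} in this language. Choose a generating set $A = A^{-1}$ of $M$ and let $\fg = \fg(A)$ be the free group. Define $\Psi \colon \fg \to M$ by letting $\Psi(w)$ be the value in $M$ of the reduced word $w = a_1 \cdots a_k$ read as the product $a_1 \cdots a_k$. Free reduction only deletes factors $aa^{-1} \le 1_M$, so the unreduced value of a concatenation is below the reduced one, giving $\Psi(v)\Psi(w) \le \Psi(vw)$; and since every $m \in M$ is a product of generators, $m$ is the unreduced value of some word and hence $m \le \Psi(w)$ for its reduced form $w$. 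Thus $\Psi$ is a covering dual prehomomorphism — but $\fg$ is infinite.

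The hard part will be to replace $\fg$ by a finite quotient $q \colon \fg \to \mg$ carrying a covering dual prehomomorphism $\psi$. The naive hope — that $\Psi$ itself factors through some finite $\mg$ — is essentially always false: the set $\{w : \Psi(w) = 1_M\}$ is a subgroup of $\fg$ (because $1_M$ is the top and $\Psi(uv) \ge \Psi(u)\Psi(v)$), but it is typically of infinite index, even trivial, so no finite quotient is forced. The crucial point is that $\psi$ need \emph{not} agree with $\Psi$ on word representatives: in a finite $\mg$ many products $gh$ are ``accidental'' coincidences reflecting no relation of $M$, and for these we only need the \emph{inequality} $\psi(g)\psi(h) \le \psi(gh)$, which costs nothing as soon as $\psi(g)\psi(h)$ is small in the partial order. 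I would therefore build $\mg$ so as to manufacture exactly this slack, exploiting that in a faithful representation $M \le I_X$ non-idempotent products strictly shrink domains, so that iterated products descend in the $\mathcal{J}$-order toward the minimal ideal, where the order leaves abundant room. Concretely I would try to present $\mg$ as a finite group acting on a finite $A$-labelled graph in the spirit of the Margolis--Meakin and Stallings constructions, with relations long enough that short words behave freely while every ``long'' accidental product is pushed below an already-chosen value of $\psi$; an induction on the ideal structure of $M$, peeling off the minimal ideal and splicing covers across the resulting extension, is the organising principle I expect to need. Engineering a single finite group in which \emph{all} the prehomomorphism inequalities survive simultaneously is the genuine obstacle, and is presumably where the novelty over Lawson's infinite construction must lie.
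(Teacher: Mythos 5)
Your reduction is sound and is, up to a change of language, the same as the paper's: a finite group $\mg$ equipped with a covering dual prehomomorphism $\psi\colon \mg \to M$ is equivalent to a finite $P$-generated group that is \emph{strongly compatible} with $M$ in the paper's sense (take $\psi(g)$ to be the maximum of the values $u^M$ over all words $u$ representing $g$, a maximum whose existence is exactly what strong compatibility asserts), and your monoid $N = \set{(m,g)}{m \leq \psi(g)}$ plays the role of the paper's $M \times_P G$. The structural verifications you list (idempotents, fibres as $\sigma$-classes with top element $(\psi(g),g)$, surjectivity and idempotent-separation of $\theta$) are all correct, and your free-group computation correctly recovers \Cref{theorem of lawson}.

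The gap is that you never prove the lemma that carries the entire weight of the theorem, namely the existence of the finite group $\mg$ (the paper's \Cref{finite strongly compatible groups}). You correctly identify this as the genuine obstacle, but the strategy you sketch --- relations long enough that short words behave freely, plus an induction on the ideal structure of $M$ --- is not developed into an argument and does not resemble what actually makes the proof work. The paper's construction is: take a finite compatible group $F$ (\Cref{finite compatible groups}, by extending the partial bijections of a Wagner--Preston representation to permutations); form the Cayley graph $I$ of $F$ over $P$; invoke Otto's theorem (\Cref{ottos theorem}) to obtain a finite, symmetric, $2$-acyclic $I$-groupoid $\mh$; and let the generators of $G$ act on $\mh$ by right translation along lifted edges. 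Strong compatibility is then extracted from $2$-acyclicity: two words with the same image in $G$ lift to walks $u,w$ in $I$ with $u^\mh = w^\mh$, $2$-acyclicity supplies a walk $v$ using only edges common to $u$ and $w$ with $v^\mh = u^\mh = w^\mh$, and a separate combinatorial lemma (\Cref{true facts about I}) shows that such a $v$ dominates both $\pi(u)^M$ and $\pi(w)^M$ in the natural partial order. None of this is visible in your sketch; the problem was open for decades precisely because no direct presentation-theoretic or ideal-theoretic induction was known to produce such a group, so the proposal cannot stand as a proof without this core construction.
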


\Cref{finite lawson}
was first conjectured
by Henckell and Rhodes \cite{HenckellRhodes1991}
as a possible route to an affirmative answer for the
pointlike conjecture, an important conjecture
in the theory of monoids (cf.\ \cite{HenckellMargolisPinRhodes1991}).
The `pointlike conjecture' was proved to be true by
Ash \cite{Ash1991} but the validity
of the finite version of Lawson's Covering Theorem
remained an open problem.
Some conditional results on the conjecture
of Henckell and Rhodes are given in
\cite{AuingerSzendrei2006,Szakacs2016,SzakacsSzendrei2016}.

The proof of \Cref{finite lawson} is divided in two parts.
In \Cref{section: covers} we introduce a compatibility
relation between groups and inverse monoids
and show how these compatible groups can be used
to obtain F-inverse covers (\Cref{p-product works if group is good} (ii)).
In \Cref{constructing strongly compatible groups},
which constitutes the technical part of this work,
we then show the existence of finite such compatible
groups (\Cref{G is an algebraic realisation}).
While developing these techniques we also give proofs
of McAlister's and Lawson's covering theorems.

In \Cref{section: inverse semigroups} we discuss
these covering theorems for inverse semigroups
and how they are implied by the inverse monoid versions.

The presentation of the
results here is self-contained.
We provide all definitions
about E-unitary and F-inverse monoids
in the next section.
For further information about this topic,
the reader may consult
the monograph of Lawson \cite{Lawson1998}.
\section{Inverse monoids}\label{inverse monoids}
\subsection*{Basic definitions}
An \emph{inverse monoid} is a monoid $M$ that has for
each element $x \in M$ a unique element $x^{-1} \in M$
such that $x = xx^{-1}x$ and $x^{-1} = x^{-1}xx^{-1}$.
The partial bijections on a set $X$
form the \emph{symmetric inverse monoid on $X$};
denoted by $I(X)$.
The Wagner--Preston Representation Theorem
tells us that every inverse monoid
is a submonoid of a symmetric inverse monoid.
Consequently, we can visualise many definitions
and results about inverse monoids in an illustrative way
using partial bijections.

The set of idempotents of an inverse monoid $M$ is denoted by $E(M)$.
In $I(X)$ the idempotents are just the restrictions
of the identity. So clearly, the idempotents of $I(X)$
commute and thus, by the Wagner--Preston Theorem, the idempotents of any inverse monoid do commute.

The \emph{natural partial order} on $M$
is defined by
\[ x \leq y \quad :\Longleftrightarrow \quad x = ey
\text{ for some $e \in E(M)$}.\]
In $I(X)$ the natural partial order
$x \leq y$ just amounts to `$x$ is a restriction of $y$'.
From this we can deduce, using Wagner--Preston, that the natural partial
order on an inverse submonoid $M$ of an inverse monoid $N$
is induced by the natural partial order on $N$.
Also note that the idempotents $E(M)$ are those elements that lie
beneath $1$ w.r.t. the natural partial order.

The \emph{minimum group congruence} on $M$
is defined by
\[ x \sigma y \quad :\Longleftrightarrow \quad
z \leq x,y \text{ for some $z \in M$}.\]
As the name suggest, $\sigma$ is the smallest
congruence $\rho$ on $M$ for which $M/\rho$ is a group.
Note that all idempotents lie in the same
$\sigma$-class and that $1$ is a maximal element
in this class w.r.t.\ the natural partial order.
\subsection*{E-unitary inverse monoids and F-inverse monoids}
A subset $X$ of an inverse monoid $M$
is called \emph{unitary} if for any elements
$x \in X$ and $z \in M$, $xz \in X$ or $zx \in X$
implies $z \in X$.

An inverse monoid is \emph{$E$-unitary}
if $E(M)$ is unitary.
It is easy to see that an inverse monoid is 
E-unitary if, and only if,
\[ e \leq x \text{ for some } e \in E(M) \quad
\Longrightarrow \quad x \in E(M)\]
for all $x \in M$.

An \emph{F-inverse monoid} is an inverse monoid
in which each $\sigma$-class
has a maximum
w.r.t.\ the natural partial order.
A inverse monoid is an F-inverse monoid if, and only if,
\[
z \leq x,y \text{ for some } z
\quad \Longrightarrow \quad 
x,y \leq z \text{ for some } z
\]
for all $x,y \in M$.
$F$-inverse monoids are also $E$-unitary:
if $e \leq x$ with $e \in E(M)$,
then $e \leq x,1$.
So there is a $z$ with $x,1 \leq z$.
Clearly, $z = 1$ and so $x \leq 1$.
Thus $x \in E(M)$.

\subsection*{Covers}
A \emph{homomorphism} between inverse monoids is a
semigroup homomorphism.
A homomorphisms $\theta \colon N \to M$ maps idempotents
to idempotents and is order-preserving
w.r.t.\ the natural partial order, i.e.,
\[ x \leq y \quad \Longrightarrow \quad \theta(x) \leq \theta(y).\]
A homomorphism is
a \emph{cover} if it is surjective;
it is \emph{idempotent-separating}
if it is injective on the idempotents.
\section{Constructing covers using compatible groups}\label{section: covers}
\subsection*{Inverse monoids with generators}
It is beneficial for us to use
inverse monoids with a fixed set of generators.
We let $P$ always stand for a set
of symbols with an associated involution $(\cdot)^{-1}$.
A \emph{$P$-generated inverse monoid}
is an inverse monoid $M$ with a set of generators
$\set{ p^M }{p \in P}$, s.t.
$(p^M)^{-1} = (p^{-1})^M$.

For $u = p_1 \dots p_n \in P^*$
we set $u^M = p_1^M \dots p_n^M$.
Note that $(\cdot)^M$ constitutes
a monoid homomorphism and 
that $(\cdot)^M$ is also compatible with inverses, i.e.,
$(u^{-1})^M = (u^M)^{-1}$
where $u^{-1} := p_n^{-1} \dots p_1^{-1}$.

Clearly any inverse monoid can be cast
as a $P$-generated inverse monoid
for a suitable choice of $P$,
and for finite inverse monoids
we can choose this $P$ to be finite
as well. We always assume
that $P$ is finite if we talk
about finite $P$-generated inverse monoids.

\subsection*{The product construction}
We describe a product
of $P$-generated inverse mon\-oids
and $P$-generated groups
that induces idempotent-separating covers.

For a $P$-generated monoid $M$ and a $P$-generated group $G$
let $M \times_P G$ be
the $P$-generated inverse monoid
given as a submonoid of $M \times G$
with generators $\set{ (p^M,p^G)}{p \in P}$.

The natural partial order on $M \times_P G$
is given by
\begin{align*} (m_1,g_1) \leq (m_2,g_2)
\quad &\Longleftrightarrow \quad
m_1 \leq m_2 \text{ and } g_1 = g_2,
\shortintertext{and the idempotents are given by}
(m,g) \in E(M \times_P G)
\quad &\Longleftrightarrow \quad
m \in E(M) \text{ and } g = 1.
\end{align*}
\begin{lemma}\label{p-product is idempotent-separating}
Let $\pi \colon M \times_P G \to M$ be the projection
to the first component.
Then $\pi \colon M \times_P G \to M$
is an idempotent-separating, surjective
homomorphism.
\end{lemma}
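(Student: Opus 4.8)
The plan is to prove three things about the projection $\pi \colon M \times_P G \to M$, $(m,g) \mapsto m$: that it is a homomorphism, that it is surjective, and that it is injective on idempotents. The homomorphism claim is immediate, since $\pi$ is the restriction to the submonoid $M \times_P G$ of the first-coordinate projection $M \times G \to M$, which is a monoid homomorphism; restrictions of homomorphisms to submonoids are again homomorphisms, and a semigroup homomorphism between inverse monoids is exactly what we mean by a homomorphism of inverse monoids.

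For surjectivity, I would argue generator by generator. Every element of $M$ is of the form $u^M$ for some word $u = p_1 \cdots p_n \in P^*$, because $M$ is $P$-generated. Correspondingly, the element $(u^M, u^G) = (p_1^M, p_1^G) \cdots (p_n^M, p_n^G)$ lies in $M \times_P G$ by definition of the generators, and $\pi(u^M, u^G) = u^M$. So every element of $M$ is hit; this is where I would use that $\pi$ respects products, so that a word in the generators maps to the corresponding word in $M$.

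For idempotent-separation I would invoke the explicit description of the idempotents of $M \times_P G$ supplied just before the statement, namely that $(m,g) \in E(M \times_P G)$ if and only if $m \in E(M)$ and $g = 1$. Given two idempotents $(m_1, 1)$ and $(m_2, 1)$ with $\pi(m_1,1) = \pi(m_2,1)$, we get $m_1 = m_2$ directly, and since the second coordinates are both $1$, the two elements coincide. Hence $\pi$ restricted to $E(M \times_P G)$ is injective.

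The only genuinely substantive step is the description of the idempotents, but that has been stated in the excerpt immediately preceding the lemma and so may be assumed; consequently I expect no real obstacle here. If one wanted the lemma to stand without that preceding computation, the main work would instead lie in verifying the idempotent characterisation, which follows from the fact that $(m,g)$ is idempotent in $M \times_P G$ iff it is idempotent in $M \times G$ (the order and idempotent structure on a submonoid being induced, as noted earlier for the natural partial order), and that $(m,g)^2 = (m^2, g^2) = (m,g)$ forces $g^2 = g$, whence $g = 1$ in the group $G$, together with $m^2 = m$. With the idempotent description in hand the proof is a short verification.
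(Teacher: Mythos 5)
Your proof is correct and follows the same route as the paper: the paper simply notes that $\pi$ is clearly a surjective homomorphism and that idempotents of $M \times_P G$ are determined by their first component, which is exactly the argument you spell out in more detail (including the verification that idempotence forces the second coordinate to be $1$).
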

\begin{proof}
Clearly $\pi$ is a homomorphism
and surjective.
$\pi$ is injective
on $E(M \times_P G)$
since idempotents in $M \times_P G$
are purely characterised by their first
component.
\end{proof}
\subsection*{Compatible groups}
We introduce two
compatibility notions
between $P$-gener\-ated
inverse monoids
and $P$-generated groups that ensure
that the product $M \times_P G $
is (i) $E$-unitary or (ii) an $F$-inverse monoid.
\begin{definition}
A $P$-generated group $G$ is
\begin{enumerate}[label = (\roman*)]
\item \emph{compatible} with $M$ if for all $u \in P^*$
\[ u^G = 1 \quad \Longrightarrow \quad u^M \leq 1.\]
\item \emph{strongly compatible} with $M$ if
for all $u,w \in P^*$
\[ u^G = w^G
\; \Longrightarrow \;
v^G = u^G = w^G
\text{ and } u^M, w^M \leq v^M \text{ for some } v \in P^*.\]
\end{enumerate}
\end{definition}
\begin{lemma}\label{p-product works if group is good}
Let $M$ be a $P$-generated inverse monoid
and $G$ a $P$-generated group.
Then $ M \times_P G$
is 
\begin{enumerate}[label = (\roman*)]
\item E-unitary if $G$ is compatible with $M$.
\item an F-invers monoid if $G$ is strongly compatible with $M$.
\end{enumerate}
\end{lemma}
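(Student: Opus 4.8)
The plan is to verify the two characterizations directly, using the explicit descriptions of the natural partial order and the idempotents on $M \times_P G$ given just before the lemma. Throughout I would write a typical element as $(m,g)$ and freely use that $(m_1,g_1) \leq (m_2,g_2)$ iff $m_1 \leq m_2$ and $g_1 = g_2$, and that $(m,g)$ is idempotent iff $m \in E(M)$ and $g=1$. I would also use the elementary characterizations of E-unitary and F-inverse monoids recalled in the section on E-unitary and F-inverse monoids.

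For part (i), I want to show $M \times_P G$ is E-unitary assuming $G$ is compatible with $M$. Using the stated criterion, it suffices to take an idempotent $(e,1)$ with $e \in E(M)$ and an element $(m,g)$ satisfying $(e,1) \leq (m,g)$, and to deduce that $(m,g)$ is idempotent, i.e.\ $m \in E(M)$ and $g = 1$. The order condition forces $g = 1$ immediately, and $e \leq m$ in $M$. The remaining task is to show $m \in E(M)$; the point is that $(m,g)$ lies in $M \times_P G$, so it is represented by some word $u \in P^*$ with $m = u^M$ and $g = u^G$. From $g = u^G = 1$ and compatibility I get $u^M \leq 1$, i.e.\ $m \leq 1$, which means $m \in E(M)$ as desired. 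So the only slightly delicate point here is remembering that every element of $M \times_P G$ comes from a word in the generators, which is exactly what lets me feed the hypothesis $u^G = 1$ into the definition of compatibility.

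For part (ii), I want to show $M \times_P G$ is an F-inverse monoid assuming $G$ is strongly compatible. Using the stated F-inverse criterion, I would start with two elements $(m_1,g_1)$ and $(m_2,g_2)$ that have a common lower bound $(m_0,g_0)$, and try to produce a common upper bound. The existence of the lower bound forces $g_0 = g_1 = g_2 =: g$ and $m_0 \leq m_1, m_2$ in $M$. Now represent the two elements by words $u, w \in P^*$, so $m_1 = u^M$, $m_2 = w^M$, and $u^G = g = w^G$. Strong compatibility, applied to $u^G = w^G$, yields a word $v \in P^*$ with $v^G = g$ and $u^M, w^M \leq v^M$. Setting $(v^M, v^G)$ as the candidate element of $M \times_P G$, I would check $(m_1,g) \leq (v^M, g)$ and $(m_2,g) \leq (v^M,g)$: both reduce to $m_i \leq v^M$ in $M$ together with matching second coordinate $g = v^G$, which is exactly what strong compatibility provided. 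Hence $(v^M, v^G)$ is the required common upper bound.

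I expect no serious obstacle in either part; the main thing to get right is the bookkeeping that every element of $M \times_P G$ is realized by a word over $P$, so that the hypotheses on $G$ (which are phrased in terms of words $u,w,v \in P^*$) can actually be invoked. The matching of second coordinates in the order relation is what makes the group condition $u^G = w^G$ the natural trigger, and strong compatibility is precisely engineered so that the common upper bound $v$ has the same image in $G$, keeping the pair inside $M \times_P G$.
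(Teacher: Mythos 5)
Your proposal is correct and follows essentially the same route as the paper: both arguments use that every element of $M \times_P G$ has the form $(u^M,u^G)$ for some word $u \in P^*$, read off the order and idempotent conditions componentwise, and feed the resulting equality of $G$-components into (strong) compatibility. No discrepancies to report.
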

\begin{proof}
Let $N = M \times_P G$.
We use the fact that we can denote the elements in $N$
by $u^N = (u^M,u^G)$ for $u \in P^*$.
\begin{enumerate}[label = (\roman*)]
\item Let $u^N \leq w^N$
and $u^N \in E(N)$.
Then $u^G = w^G$ and $u^G = 1$.
Hence $w^G = 1$.
Since $G$ is compatible with $M$, this gives $w^M \leq 1$,
i.e., $w^M = e \in E(M)$.
So $w^N = (w^M,w^G) = (e,1) \in E(N)$.
\item Let $w^N \leq u_1^N,u_2^N$.
Then $u_1^G = w^G = u_2^G$.
Since $G$ is strongly compatible with $M$,
we obtain a $v \in P^*$ s.t.\
$v^G = u_1^G = u_2^G$ and $u_1^M,u_2^M \leq v^M$.
So $u_1^N = (u_1^M,u_1^G) \leq (v^M,v^G) = v^N$
and $u_2^N = (u_2^M,u_2wG) \leq (v^M,v^G) = v^N$.
\end{enumerate}
\end{proof}
Note that $\fg(P)$, the free group over $P$,
is strongly compatible with any $P$-generated inverse monoid.
With this we can prove the covering theorems
of McAlister and Lawson.
\begin{proof}[Proof of \Cref{theorem of mcalister} and \Cref{theorem of lawson}]
It suffices to show \Cref{theorem of lawson}
since F-inverse monoids are also E-unitary.

Let $M$ be an inverse monoid.
We can see $M$  as a $P$-generated inverses monoid
by a suitable choice of $P$ and generators of $M$.
Then $M \times_P \fg(P)$ is an F-inverse monoid
by \Cref{p-product works if group is good}.
Furthermore, $\pi \colon M \times_P \fg(P) \to M$
is an idempotent-separating cover
by \Cref{p-product is idempotent-separating}.
\end{proof}
We now want to prove the finite
versions of these covering theorems.
For that we have to provide finite (strongly) compatible groups.
\begin{lemma}\label{finite compatible groups}
For every finite $P$-generated inverse monoid
there is a finite compatible $P$-generated group.
\end{lemma}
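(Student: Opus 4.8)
The plan is to realise $M$ concretely as partial bijections and then simply fill in each generator to a total permutation. By the Wagner--Preston Representation Theorem I may assume that $M$ is an inverse submonoid of $I(X)$ for a finite set $X$ (e.g.\ $X = M$), so that each generator $p^M$ is a partial bijection $f_p$ of $X$ and, crucially, the natural partial order on $M$ is \emph{induced} from that of $I(X)$, as already recorded in \Cref{inverse monoids}. Thus it suffices to produce a finite permutation group on $X$ whose generated partial-order-$\leq 1$ behaviour matches $M$.

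The key observation is that a partial bijection of a finite set always extends to a total permutation: if $f_p$ has domain $A_p$ and range $B_p$ then $\lvert X \setminus A_p\rvert = \lvert X \setminus B_p\rvert$, so I may pick an arbitrary bijection $X \setminus A_p \to X \setminus B_p$ and adjoin it to $f_p$ to obtain a permutation $\tilde p \in \mathrm{Sym}(X)$ extending $f_p$. I would make these choices compatibly with the involution by setting $\widetilde{p^{-1}} := (\tilde p)^{-1}$, which automatically extends $(p^M)^{-1} = (p^{-1})^M$ (for a self-inverse symbol one extends instead to an involution, e.g.\ by the identity off $A_p$). Then $G := \langle\, \tilde p : p \in P \,\rangle \leq \mathrm{Sym}(X)$ is a finite $P$-generated group with $p^G = \tilde p$ and $(p^{-1})^G = (p^G)^{-1}$.

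The heart of the argument is that this \emph{conservative} extension is inherited by arbitrary products: for every word $u = p_1 \cdots p_n \in P^*$ the permutation $u^G$ agrees with the partial bijection $u^M$ on $\operatorname{dom}(u^M)$. I would prove this by induction along $u$: for $x \in \operatorname{dom}(u^M)$ all intermediate images under $p_1^M, \dots, p_n^M$ are defined and lie in the successive domains, so at each step the total map $\tilde p_i$ is evaluated exactly where it agrees with $p_i^M$; hence the adjoined part of the permutations is never used and $u^G(x) = u^M(x)$. Granting this, compatibility is immediate: if $u^G = 1$, i.e.\ $u^G = \id_X$, then $u^M(x) = x$ for every $x \in \operatorname{dom}(u^M)$, so $u^M$ is a restriction of the identity, whence $u^M \leq 1$ in $I(X)$ and therefore in $M$.

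I expect the only genuine work to be the inductive agreement statement in the third paragraph; the existence of the extensions and the reading-off of compatibility are bookkeeping. The subtle point to get right is that one must track membership in the domains of the intermediate partial bijections, since it is precisely the fact that the orbit of $x$ stays inside those domains that prevents the permutations from ever diverging from the partial maps.
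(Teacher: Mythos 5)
Your proposal is correct and takes essentially the same route as the paper: represent $M$ inside $I(X)$ for finite $X$ via Wagner--Preston, extend each generator's partial bijection to a total permutation, and take the subgroup of the symmetric group these generate. The paper states this in four lines and leaves implicit exactly the inductive agreement of $u^G$ with $u^M$ on $\operatorname{dom}(u^M)$ that you correctly identify as the only real work.
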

\begin{proof}
By the Wagner--Preston Theorem
we can think of the $P$-generated inverse monoid $M$
as a submonoid of $I(X)$ for some finite set $X$.
Then every $p^M$ is a partial bijection on $X$.
We can extend each of the $p^M$ to
a bijection $p^G$.
Then the subgroup of the symmetric group of $X$
generated by the $\set{ p^G }{p \in P}$
is compatible with $M$.
\end{proof}
We can now finish the proof of the finite
version of McAlister's Covering Theorem.
\begin{proof}[Proof of \Cref{finite macalister}]
Let $M$ be a finite inverse monoid.
We can see $M$ as a finite $P$-generated inverse monoid
by a suitable choice of $P$ and generators of $M$.
\Cref{finite compatible groups} guarantees
the existence of a finite $P$-generated group $G$ compatible with $M$.
The product $M \times_P G$ is a finite, E-unitary inverse monoid
by \Cref{p-product works if group is good}.
Furthermore, $\pi \colon M \times_P G \to M$
is an idempotent-separating cover
by \Cref{p-product is idempotent-separating}.
\end{proof}
Similarly, with the help of the next lemma
we can obtain the finite version
of Lawson's Covering Theorem.
\begin{lemma}\label{finite strongly compatible groups}
For every finite $P$-generated inverse monoid
there is a finite strongly compatible $P$-generated group.
\end{lemma}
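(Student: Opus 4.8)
The plan is to first reformulate strong compatibility so that it becomes transparent what the group must accomplish, and then to produce the required group as a sufficiently fine finite quotient of $\fg(P)$. For a $P$-generated group $G$ and $g \in G$ write $S_g = \set{w^M}{w \in P^*,\ w^G = g}$ for the set of $M$-images of the words in the fibre over $g$. Unwinding the definition, $G$ is strongly compatible with $M$ exactly when every $S_g$ is \emph{up-directed}: any two of its elements admit a common upper bound lying again in $S_g$. Since $M$ is finite, a finite up-directed subset of $(M,\le)$ always has a greatest element, so strong compatibility is equivalent to demanding that each fibre $S_g$ possess a maximum. This is the property I would establish, and it is instructive to see why $\fg(P)$ has it: if $w^{\fg(P)} = g$ then $w$ freely reduces to the unique reduced word $r_g$ representing $g$, and each elementary reduction $p p^{-1} \to \varepsilon$ only multiplies the $M$-image by an idempotent, hence can only decrease it in the natural partial order; thus $w^M \le r_g^M$, and $r_g^M$ is the maximum of $S_g$.

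Passing to a finite quotient $G = \fg(P)/N$ coarsens the fibres: words lying far apart in the free group, possibly even with incompatible $M$-images, may now be identified, and there is no longer a canonical reduced representative to serve as a maximum. Two distinct difficulties must be overcome. First, one must not collapse a pair $u,w$ whose images $u^M, w^M$ have no common upper bound in $M$ at all, since such a fibre can never be directed; concretely, writing $B = \set{u^{\fg(P)}(w^{\fg(P)})^{-1}}{u^M, w^M \text{ incompatible in } M}$, the normal subgroup $N$ must be disjoint from $B$. As $1 \notin B$ (equal free-group images force a common upper bound, by the previous paragraph), this is a separability requirement, and the task of this first step is to separate $B$ from $1$ by a finite-index $N$. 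Second -- and this is where I expect the real difficulty -- even once every fibre consists of pairwise compatible elements, the common upper bounds must be \emph{attained by a word in the same fibre}: the join $u^M \vee w^M$ computed in $M$ need not be of the form $v^M$ for any $v$ with $v^G = u^G$.

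To attain these joins with a finite group I would approximate $\fg(P)$ by a quotient that is injective on a large ball: by residual finiteness of free groups there is, for each radius $r$, a finite $P$-generated group on which $\fg(P) \to G$ is injective on all elements of length at most $r$. The idea is that choosing $r$ larger than the relevant constants attached to $M$ lets one perform the free-group reductions witnessing the maxima \emph{locally}, inside the ball where $G$ still behaves freely, while simultaneously separating the finitely many incompatibility obstructions encoded in $B$. Organising these local reductions into a single finite group so that every fibre genuinely has a maximum is the main obstacle, and it is precisely the content of the construction carried out in \Cref{constructing strongly compatible groups}, where such a group is produced as an algebraic realisation (\Cref{G is an algebraic realisation}). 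Granting that construction, the lemma follows, and \Cref{p-product works if group is good}~(ii) together with \Cref{p-product is idempotent-separating} then yield the finite F-inverse cover of \Cref{finite lawson}.
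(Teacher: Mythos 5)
Your opening reformulation is correct and genuinely useful: strong compatibility of $G$ with $M$ says exactly that each fibre $S_g=\set{w^M}{w^G=g}$ is up-directed with upper bounds inside $S_g$, and since $M$ is finite this is equivalent to each nonempty $S_g$ having a greatest element; your verification that $\fg(P)$ has this property (free reduction only multiplies by idempotents on the left, hence only moves up in the natural partial order) matches the paper's remark that $\fg(P)$ is strongly compatible with every $P$-generated inverse monoid. But from that point on you do not prove the lemma: you identify the two obstacles and then write that overcoming them ``is precisely the content of the construction carried out in \Cref{constructing strongly compatible groups}.'' That construction \emph{is} the paper's proof of this lemma, so invoking it is circular. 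What remains of your own argument is the residual-finiteness sketch, and that sketch has a concrete flaw: the set $B = \set{u^{\fg(P)}(w^{\fg(P)})^{-1}}{u^M,w^M \text{ incompatible}}$ is \emph{not} finite. There are only finitely many incompatible pairs in $M\times M$, but each pair has infinitely many word representatives, so $B$ is an infinite subset of $\fg(P)$, and residual finiteness gives no way to keep a finite-index normal subgroup disjoint from it. Likewise, injectivity of $\fg(P)\to G$ on a ball of radius $r$ says nothing about two long words $u,w$ that are \emph{distinct} in $\fg(P)$ but identified in $G$ (such pairs must exist for any finite quotient); no local free reduction relates them, so the required $v$ in their common fibre is not produced. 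This second obstacle, which you yourself flag as ``the main obstacle,'' is exactly the 25-year-old difficulty, and your sketch does not touch it.

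For comparison, the paper's route is entirely different and does not pass through quotients of $\fg(P)$ at all. It first takes a finite group $F$ compatible with $M$ (\Cref{finite compatible groups}), forms the Cayley-type graph $I$ of $F$, invokes Otto's theorem (\Cref{ottos theorem}) to obtain a finite, symmetric, $2$-acyclic $I$-groupoid $\mh$, and lets $G$ act on $\mh$ by right multiplication with generator edges. Given $a^G=b^G$, the lifts $u,w$ of $a,b$ to walks in $I$ satisfy $u^\mh=w^\mh$, and $2$-acyclicity supplies a walk $v$ using only edges common to $u$ and $w$ with $v^\mh=u^\mh=w^\mh$; \Cref{true facts about I} (which uses compatibility of $F$ to see that closed walks project to idempotents of $M$) then yields $a^M,b^M\le \pi(v)^M$, and symmetry of $\mh$ (\Cref{true facts about G}) upgrades agreement at one point to $a^G=\pi(v)^G=b^G$. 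The combinatorial acyclicity of the groupoid is the mechanism that manufactures the common upper bound inside the fibre; nothing in your proposal substitutes for it.
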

\begin{proof}
The proof is the subject of \Cref{constructing strongly compatible groups}.
There we describe a construction of a
finite $P$-generated group $G$ for a given finite $P$-generated
inverse monoid $M$ that is compatible with $M$ by \Cref{G is an algebraic realisation}. 
\end{proof}
\begin{proof}[Proof of \Cref{finite lawson}]
Let $M$ be a finite inverse monoid.
We can see $M$ as a finite $P$-generated inverse monoid
by a suitable choice of $P$ and generators of $M$.
\Cref{finite strongly compatible groups} guarantees
the existence of a finite $P$-generated group $G$ strongly compatible with $M$.
The product $M \times_P G$ is a finite, F-inverse monoid
by \Cref{p-product works if group is good}.
Furthermore, $\pi \colon M \times_P G \to M$
is an idempotent-separating cover
by \Cref{p-product is idempotent-separating}.
\end{proof}
\section{Covering theorems for inverse semigroups}
\label{section: inverse semigroups}
The covering theorems
of McAlister and Lawson
are originally stated for for inverse semigroups.
In this section we want to argue
that these distinctions in the covering theorems
do not matter as we can deduce
the inverse semigroup versions from the inverse monoid versions
and vice versa.

An \emph{inverse semigroup}
is a semigroup that has unique inverses
as defined for inverse monoids.
All notions introduced so far translate 
to inverse semigroups as well.
(F-inverse semigroups are an exception; we discuss
this after the following lemma.)

It is easy to see that the covering theorems
for inverse semigroups directly imply
their counterparts for inverse monoids
by virtue of the following lemma.
\begin{lemma*}
Let $\theta \colon S \to M$ be a surjective, idempotent-separating
homomorphism of inverse semigroups.
Then $S$ is a monoid if $M$ is a monoid.
\end{lemma*}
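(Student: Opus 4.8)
The plan is to exhibit an identity element of $S$ as a distinguished idempotent, namely the $\theta$-preimage of the identity $1_M$ of $M$, and then to verify it is a two-sided identity using only standard properties of the natural partial order together with the two hypotheses on $\theta$.

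First I would record that $1_M$ is the maximum idempotent of $M$: for every $e \in E(M)$ we have $e = e\,1_M$, so $e \leq 1_M$. Since $\theta$ is surjective, I may choose $s \in S$ with $\theta(s) = 1_M$ and set $f = ss^{-1}$. Then $f \in E(S)$, and since $\theta(s^{-1}) = \theta(s)^{-1} = 1_M$ we get $\theta(f) = \theta(s)\theta(s^{-1}) = 1_M 1_M = 1_M$. So $f$ is an idempotent of $S$ lying above $1_M$ in the obvious sense.

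The key step is to show that $f$ is the maximum idempotent of $S$. Let $e \in E(S)$ be arbitrary. Then $ef$ is again an idempotent, and $\theta(ef) = \theta(e)\theta(f) = \theta(e)\,1_M = \theta(e)$. Because $\theta$ is idempotent-separating and both $ef$ and $e$ are idempotents with the same image, this forces $ef = e$, that is, $e \leq f$. Thus $f$ dominates every idempotent of $S$; this is precisely the point where idempotent-separation is used to ``reflect'' the order among idempotents, and I expect this to be the only delicate moment of the argument, though it is a short one.

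Finally I would upgrade ``$f$ is the maximum idempotent'' to ``$f$ is a two-sided identity''. For any $s \in S$ the idempotent $ss^{-1}$ satisfies $ss^{-1} \leq f$, hence $f\,ss^{-1} = ss^{-1}$, and therefore $fs = f(ss^{-1}s) = (f\,ss^{-1})s = ss^{-1}s = s$. Symmetrically, $s^{-1}s \leq f$ gives $s^{-1}s\,f = s^{-1}s$, whence $sf = (ss^{-1}s)f = s(s^{-1}s\,f) = s(s^{-1}s) = s$. So $f$ is an identity of $S$, and $S$ is a monoid. The whole argument uses the hypotheses in exactly two places — surjectivity to produce $f$ and idempotent-separation to conclude $ef = e$ — with everything else being routine manipulation of the natural partial order, so I anticipate no substantial obstacle.
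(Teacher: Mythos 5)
Your argument is correct and follows essentially the same route as the paper: pick a preimage of $1_M$, replace it by the idempotent $ss^{-1}$, use idempotent-separation to show this idempotent absorbs every idempotent of $S$, and then conclude $fs = f(ss^{-1})s = s$ and symmetrically $sf = s$. The only cosmetic difference is your preliminary remark that $1_M$ is the maximum idempotent of $M$, which the paper does not need to state explicitly.
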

\begin{proof}
Let $f \in S$ with $\theta(f) = 1$.
W.l.o.g. $f \in E(S)$,
otherwise we continue with $ff^{-1}$.
For $e \in E(S)$, $\theta(fe) = \theta(e)$
and so $fe = e$ as $\theta$ is injective on $E(S)$.
Thus $fe = e$ for all $e \in E(S)$.
Now, for $x \in S$,
\[ fx = fxx^{-1}x =  (fxx^{-1})x = xx^{-1}x = x.\]
Similarily $xf = x$. 
Thus $f$ is a neutral element in $S$.
\end{proof}

For F-inverse covers the situation
is actually a bit more complicated.
If we transfer the definition
of F-inverse monoids
to inverse semigroups we get
that each such `F-inverse semigroup'
is automatically a monoid.
However, there is a definition
of F-inverse semigroups that extends
the notion of F-inverse monoids
but does not force a semigroup
to be a monoid (see \cite[Chapter 7.4]{Lawson1998}
for the definition and properties of F-inverse semigroups).
Lawson's Covering Theorem
is given with this definition
of F-inverse semigroups in mind.
Nevertheless, the notions of F-inverse semigroups
and F-inverse monoids agree on the class of monoids
and so
Lawson's Covering Theorem for inverse semigroups
directly implies Lawson's Covering Theorem for inverse monoids.

Now we describe how to obtain the
covering theorems for inverse semigroups
using the covering theorems for inverse monoids.
Let $S$ be an inverse semigroup.
Then $S^1$ is constructed from 
$S$ by adding a neutral element.
Note that $S^1$ does not have any
non-trivial units.
The idea is now that we can
apply the covering theorems
to $S^1$ and subsequently 
remove the $1$ again.

The next lemma shows
that we can get McAlister's Covering Theorem for inverse semigroups
from the corresponding theorem for inverse monoids.
\begin{lemma*}
Let $N$ be an E-unitary inverse monoid, $S$ an inverse semigroup
and $\theta \colon N \to S^1$ an idempotent-separating cover.
Then $N\setminus \ker(\theta)$ is an E-unitary inverse semigroup
and
$\theta|_{N\setminus \ker(\theta)} \colon N\setminus \ker(\theta) \to S$
is an idempotent-separating cover.
\end{lemma*}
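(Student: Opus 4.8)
The plan is to unwind the notation and recognise that deleting $\ker(\theta)$ from $N$ is the same as passing to the preimage $\theta^{-1}(S)$, which then inherits every desired property from $N$. First I would pin down the kernel: since $\theta$ is a surjective homomorphism of monoids it sends $1_N$ to a two-sided identity of $S^1$ (for any $s$ choose $n$ with $\theta(n)=s$; then $\theta(1_N)s=\theta(1_N n)=s$ and likewise on the right), so $\theta(1_N)=1$ and $\ker(\theta)=\theta^{-1}(1)$. Because $S^1=S\cup\{1\}$ with $1\notin S$, this identifies $T:=N\setminus\ker(\theta)=\theta^{-1}(S)$.

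Next I would record the one structural input: $S$ is an inverse subsemigroup of $S^1$, being closed under the product and under the inverse of $S^1$ (for $s\in S$ the $S^1$-inverse of $s$ is, by uniqueness of inverses, just its inverse $s^{-1}\in S$). Hence $T=\theta^{-1}(S)$ is closed under both operations of $N$: if $\theta(a),\theta(b)\in S$ then $\theta(ab)=\theta(a)\theta(b)\in S$ and $\theta(a^{-1})=\theta(a)^{-1}\in S$. So $T$ is an inverse subsemigroup of $N$, its idempotents are $E(T)=E(N)\cap T$, and, as for any inverse subsemigroup, its natural partial order is the restriction of the one on $N$.

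With this in hand the two conclusions are short. For E-unitarity: given $f\in E(T)$ and $x\in T$ with $f\leq x$, we have $f\in E(N)$ and $f\leq x$ in $N$, so E-unitarity of $N$ yields $x\in E(N)$, whence $x\in E(N)\cap T=E(T)$. For the cover: $\theta|_T\colon T\to S$ is a homomorphism landing in $S$ by the very definition of $T$; it is surjective because each $s\in S$ has a $\theta$-preimage in $N$, which necessarily lies in $\theta^{-1}(S)=T$; and it is idempotent-separating because $E(T)\subseteq E(N)$ and $\theta$ is already injective on $E(N)$.

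I do not expect a genuine obstacle here, as the argument is essentially bookkeeping. The only point that deserves care is the correct reading of $N\setminus\ker(\theta)$ as the preimage $\theta^{-1}(S)$ together with the verification that this preimage is closed under inverses; both rest on the observation that $S$, the non-identity part of $S^1$, is itself an inverse subsemigroup closed under the inversion it inherits from $S^1$.
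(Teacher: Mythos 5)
Your proof is correct and follows essentially the same route as the paper: identify $N\setminus\ker(\theta)$ as an inverse subsemigroup of $N$ (you phrase closure under products via $S$ being a subsemigroup of $S^1$, the paper via $S^1$ having no non-trivial units --- the same fact), observe that E-unitarity passes to inverse subsemigroups because idempotents and the natural partial order restrict, and note that the restricted map remains a surjective idempotent-separating homomorphism. Your extra care in checking $\theta(1_N)=1$ and reading $N\setminus\ker(\theta)$ as $\theta^{-1}(S)$ only makes explicit what the paper leaves implicit.
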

\begin{proof}
Clearly, $N\setminus \ker(\theta)$ is closed under inverses.
It is also closed under products
since $S^1$ does not have any non-trivial units.
So $N\setminus \ker(\theta)$ is an inverse subsemigroup of $N$.
Being E-unitary is a universal statement and hence
it is preserved under passage to inverse subsemigroups.
So $N \setminus \ker(\theta)$ is $E$-unitary.
It is clear that $\theta|_{N\setminus \ker(\theta)} \colon N\setminus \ker(\theta) \to S$
is an idempotent separating cover.
\end{proof}
We want to prove a similar statement
for Lawson's Covering Theorem.
For that we need the following result \cite[Chapter 7.4 Lemma 8]{Lawson1998}
\begin{lemma*}
Let $S$ be an inverse subsemigroup
of an F-inverse semigroup $T$ s.t.
\begin{enumerate}[label = (\roman*)]
\item $E(S)$ is an order ideal of $E(T)$, i.e.,
\[ f \leq e \quad \Longrightarrow \quad f \in E(S)\]
for $e \in E(S)$ and $f \in E(T)$,
\item for $t \in T$,
\[t^{-1}t,tt^{-1} \in S \quad \Longrightarrow \quad t \in S.\]
\end{enumerate}
Then $S$ is also an F-inverse semigroup.
\end{lemma*}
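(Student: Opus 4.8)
The plan is to transfer the F-inverse structure from $T$ down to $S$ along the inclusion, using as the central bridge a comparison of the two minimum group congruences $\sigma_S$ and $\sigma_T$. Since $S$ is an F-inverse semigroup (in the sense of \cite[Chapter 7.4]{Lawson1998}) as soon as the relevant $\sigma_S$-classes carry greatest elements for the natural partial order, the whole problem reduces to locating, inside $S$, the maxima that $T$ already provides.

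First I would extract two preliminary facts from the hypotheses. Conditions (i) and (ii) together force $S$ to be an \emph{order ideal} of $T$: if $z \le a$ with $a \in S$, then $zz^{-1} \le aa^{-1}$ and $z^{-1}z \le a^{-1}a$, so $zz^{-1}, z^{-1}z \in E(S)$ by the order-ideal hypothesis (i), whence $z \in S$ by (ii). From this I obtain $\sigma_S = \sigma_T \cap (S \times S)$: the inclusion $\sigma_S \subseteq \sigma_T$ is automatic because the natural partial order of $S$ is induced by that of $T$, while for the converse, if $a,b \in S$ with $a \,\sigma_T\, b$ I pick a common lower bound $z \le a,b$ in $T$; as $S$ is an order ideal and $z \le a \in S$, the bound $z$ lies in $S$ and witnesses $a \,\sigma_S\, b$.

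With this in hand I fix $a \in S$ and let $m \in T$ be the greatest element of its $\sigma_T$-class. Because $m$ is greatest, that class is exactly the down-set $\{\, t \in T : t \le m \,\}$, so by the previous step the $\sigma_S$-class of $a$ equals $\{\, t \in S : t \le m \,\}$, and I must produce a greatest element of this set inside $S$. Parametrising the elements below $m$ as $t = me$ with $e \le m^{-1}m$ idempotent, one checks through (ii) that $me \in S$ holds precisely when $e \in E(S)$ and the conjugate $mem^{-1} \in E(S)$; condition (i) then makes the set of admissible $e$ downward closed. The candidate maximum is $me^{*}$ for the largest admissible idempotent $e^{*}$, which lies in $S$ again by (ii).

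The step I expect to be the main obstacle is exactly this last one. The greatest element $m$ of the $\sigma_T$-class need not itself belong to $S$, since its corners $m^{-1}m$ and $mm^{-1}$ sit \emph{above} the idempotents of $S$ that (i) controls, so $m$ cannot simply be inherited. The real content is to show that the downward-closed family of admissible idempotents below $m^{-1}m$ actually has a greatest member $e^{*}$; this is where the interplay of (i) and (ii) is indispensable, and it is also the point where the precise semigroup-level definition of F-inverse enters, the degenerate class of idempotents having to be treated separately so as to avoid the collapse (noted in the text) whereby a greatest element in \emph{every} $\sigma$-class would force $S$ to be a monoid.
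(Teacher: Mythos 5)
The first thing to say is that the paper contains no proof of this statement: it is imported verbatim from Lawson \cite[Chapter 7.4 Lemma 8]{Lawson1998} and used as a black box, so there is no in-paper argument to compare yours against, and your attempt has to stand on its own. Its preliminary reductions are correct and are the natural opening moves: conditions (i) and (ii) do combine to make $S$ an order ideal of $T$ (via $zz^{-1}\leq aa^{-1}$ and $z^{-1}z\leq a^{-1}a$), this does yield $\sigma_S=\sigma_T\cap(S\times S)$, and the parametrisation $t=me$ of the elements below $m$, together with the observation that $me\in S$ iff $e\in E(S)$ and $mem^{-1}\in E(S)$ and that the admissible $e$ form a down-set, is all sound.

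There is, however, a genuine gap, and it sits exactly where you place it --- but it is not a deferred technicality, it is the theorem. First, your argument presupposes for $T$ the F-inverse \emph{monoid} property (a greatest element $m$ in each $\sigma_T$-class), whereas the hypothesis only makes $T$ an F-inverse \emph{semigroup} in Lawson's weaker sense; and symmetrically, what you set out to produce for $S$ (a greatest element in every $\sigma_S$-class) is strictly stronger than the conclusion and is false in the paper's intended application. Concretely, for the $\sigma$-class of the idempotents your admissible set is all of $E(S)$, and a greatest admissible idempotent $e^{*}$ there would be an identity element for $S$; but the lemma is applied to $S=N\setminus\ker(\theta)$, which is precisely not a monoid in general. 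So the existence of $e^{*}$ cannot be established in the generality you need, and no interplay of (i) and (ii) will produce it. The root cause is that the proposal never fixes the semigroup-level definition of ``F-inverse'' it is proving --- the paper explicitly warns that the naive transfer of the monoid definition forces a monoid --- and the entire content of the lemma lives in the difference between that naive definition and Lawson's actual one. Until that target is stated, the ``separate treatment of the degenerate class'' promised in your last sentence is not a loose end to be tied up but the whole of what remains to be proved.
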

\begin{lemma*}
Let $N$ be an F-inverse monoid, $S$ an inverse semigroup
and $\theta \colon N \to S^1$ an idempotent-separating cover.
Then $N\setminus \ker(\theta)$ is an F-inverse semigroup
and
$\theta|_{N\setminus \ker(\theta)} \colon N\setminus \ker(\theta) \to S$
is an idempotent separating cover.
\end{lemma*}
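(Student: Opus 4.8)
The plan is to follow the E-unitary lemma above essentially verbatim for the subsemigroup and cover parts, and to extract the F-inverse property from the subsemigroup criterion stated just above (the lemma with conditions (i) and (ii)), applied with the ambient F-inverse semigroup taken to be $N$ itself — an F-inverse monoid is in particular an F-inverse semigroup — and with the inverse subsemigroup taken to be $T := N \setminus \ker(\theta)$. First I would note, exactly as in the E-unitary case, that $T$ is an inverse subsemigroup of $N$: it is closed under inverses since $\theta(n^{-1}) = \theta(n)^{-1}$ and $1 = 1^{-1}$, and closed under products because $S^1$ has no non-trivial units, so $\theta(n_1)\theta(n_2) = 1$ is impossible once $\theta(n_1),\theta(n_2) \neq 1$. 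Having identified $T$ as an inverse subsemigroup of the F-inverse monoid $N$, the entire F-inverse claim reduces to verifying the two hypotheses of the criterion.

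For hypothesis (i), that $E(T)$ is an order ideal of $E(N)$, I would take $f \le e$ with $e \in E(T)$ and $f \in E(N)$ and show $f \in T$. Since homomorphisms are order-preserving, $\theta(f) \le \theta(e)$; and since $\theta(e)$ is an idempotent of $S^1$, it satisfies $\theta(e) \le 1$. If $\theta(f)$ were equal to $1$, these two inequalities would give $1 \le \theta(e) \le 1$, hence $\theta(e) = 1$, contradicting $e \in T$. Thus $\theta(f) \neq 1$, i.e.\ $f \in E(T)$. For hypothesis (ii), suppose $n^{-1}n, nn^{-1} \in T$; if $\theta(n) = 1$ then $\theta(n^{-1}n) = \theta(n)^{-1}\theta(n) = 1$, contradicting $n^{-1}n \in T$, so $\theta(n) \neq 1$ and $n \in T$. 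Applying the criterion now yields that $T$ is an F-inverse semigroup.

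It then remains to record that $\theta|_T \colon T \to S$ is an idempotent-separating cover, which is routine: for $n \in T$ we have $\theta(n) \in S^1 \setminus \{1\} = S$, so the restriction does map into $S$; it is surjective because any $s \in S$ lifts to some $n \in N$ with $\theta(n) = s \neq 1$, whence $n \in T$; and it remains injective on idempotents because $\theta$ already is.

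I expect the only non-bookkeeping step to be hypothesis (i): the point is to see that the assumption $\theta(f) = 1$ collides with $e \in T$ precisely because $1$ is the greatest idempotent of $S^1$ and $\theta$ preserves the natural partial order. Hypothesis (ii) together with the closure and cover statements are direct transcriptions of the E-unitary argument plus one-line computations, so the real work is concentrated in correctly matching the variables of the subsemigroup criterion (ambient $N$, subsemigroup $T$) and in this order-ideal check.
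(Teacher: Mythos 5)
Your proof is correct and takes essentially the same approach as the paper: both verify conditions (i) and (ii) of the quoted subsemigroup criterion for $N\setminus\ker(\theta)$ inside the F-inverse monoid $N$, with (i) resting on the order-preservation of $\theta$ and the fact that $1$ is the top idempotent of $S^1$, and (ii) on $1$ being the only unit of $S^1$. You merely spell out the closure and cover bookkeeping that the paper delegates to the preceding E-unitary lemma.
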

\begin{proof}
The only non-trivial part is to check that
$N\setminus \ker(\theta)$ satisfies the properties (i) and (ii)
of the previous lemma.

For (i) let $e,f \in E(N)$ with $f \leq e$ and  $\theta(e) \neq 1$.
Then $\theta(f) \leq \theta(e) \neq 1$ and so $\theta(f) \neq 1$.
The right-hand side of (ii) translates to `$\theta(t)$ is a unit'
but the only unity in $S^1$ is $1$. So (ii) is satisfied as well.
\end{proof}
\section{Constructing strongly compatible groups}\label{constructing strongly compatible groups}
This section is solely dedicated to the proof
of \Cref{finite strongly compatible groups}.
We describe a construction that for any given
$P$-generated inverse monoid $M$ produces
a $P$-generated group strongly compatible with $M$.
A key part (or one could say black box)
in this construction is a theorem by Otto
(\Cref{ottos theorem}).

Before we can describe and discuss the construction
we need to introduce some notations
regarding finitely generated groupoids,
in order to state Otto's Theorem.

\subsection*{Finitely generated groupoids and the Theorem of Otto}
We see a groupoid $\mg$ as a generalisation of a group
in which every element $g \in \mg$ has an associated source,
$\src(g)$, and a target, $\trgt(g)$,
which impose the usual restrictions on the multiplication operation.
The sources and targets of $\mg$ constitute
the objects of the groupoid.
We denote the neutral element at an object $a$ by $\id_a$.

Similarly to $P$-generated groups we want
to work with groupoids with generators.
By their typed nature it is natural to use graphs
to describe the generators.

A multidigraph $I = (V,E)$ is a two-sorted
structure with  vertices $V$
and edges $E$.
Every edge $e \in E$ has a source, $\src(e) \in V$,
and target, $\trgt(e) \in V$.
We also assume that there is an involution
$(\cdot)^{-1}$ on $E$ s.t.\ $\src(e^{-1}) = \trgt(e)$
and $\trgt(e^{-1}) = \src(e)$.
A walk $u$ in $I$ is a sequence
of edges $u = e_1\dots e_n$
s.t.\ $\trgt(e_i) = \src(e_{i+1})$.
We denote all walks over $I$
by $I^*$.
For $\alpha \subseteq E$ closed under $(\cdot)^{-1}$
we let $I(\alpha)$ be the subgraph $(V,\alpha)$.
So $I(\alpha)^*$ denotes all walks in $I$
which only consist of edges in $\alpha$.

An $I$-groupoid is a groupoid $\mg$ with generators
$\set{ e^\mg }{e \in E}$ s.t.\
\begin{align*}
\src(e^\mg) = \src(e), \trgt(e^\mg) = \trgt(e)\text{, and } (e^{-1})^\mg = (e^\mg)^{-1}.
\end{align*}
We let $u^\mg := e_1^\mg \dots e_n^\mg$
for $u = e_1\dots e_n \in E^*$,
and $\mg(\alpha) := \set{ u^\mg }{ u \in I(\alpha)^*}$
for $\alpha \subseteq E$ closed under $(\cdot)^{-1}$.

$\mg$ is \emph{$2$-acyclic} if for all
$\alpha,\beta \subseteq E$ closed under $(\cdot)^{-1}$
\[ \mg(\alpha) \cap \mg(\beta) = \mg(\alpha \cap \beta).\]
See \Cref{fig: sketch of 2-cycle}
for a sketch that depicts which forms of cyclic configurations 
are forbidden by $2$-acyclicity.
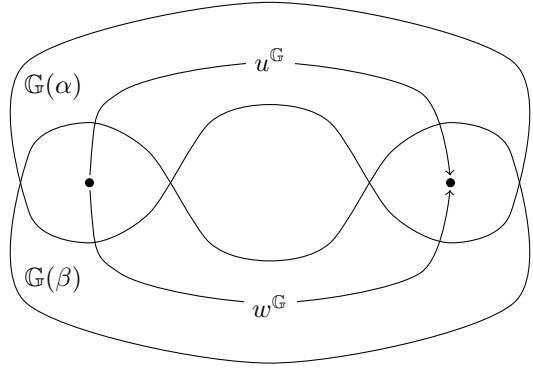
\begin{figure}
\begin{tikzpicture}[scale = 0.4,dot/.style={circle,draw,fill=black,inner sep=0pt, minimum width=3pt},shorter/.style={shorten >=1mm,shorten <=1mm},]
\draw [yshift=-1cm]  plot [smooth cycle] coordinates {(0,3.6) 
(2,3)  (4,0)  (6,-1)  (8,0)  (8.1,5)
(0,7)
(-8.1,5)  (-8,0)  (-6,-1)  (-4,0)  (-2,3)};
\draw [yshift=1cm] plot [smooth cycle] coordinates {(0,-3.6) 
(2,-3)  (4,0)  (6,1)  (8,0)  (8.1,-5)
(0,-7)
(-8.1,-5)  (-8,0)  (-6,1)  (-4,0)  (-2,-3)};
\node[dot] (a) at (-6,0) {};
\node[dot] (b) at (6,0) {};
\draw [->, shorter] plot [smooth] coordinates {
(a) (-5,3)  (0,4)  (5,3)  (b)};
\draw [->, shorter] plot [smooth] coordinates {
(a) (-5,-3)  (0,-4)  (5,-3)  (b)};
\node at (-7.2,3.2) {$\mg(\alpha)$};
\node[fill=white] at (0,4.1) {$u^\mg$};
\node at (-7.2,-3.2) {$\mg(\beta)$};
\node[fill=white] at (0,-4.1) {$w^\mg$};
\end{tikzpicture}
\caption{Sketch of a $2$-cycle.
There are $u \in I(\alpha)^*$ and $w \in I(\alpha)^*$
with $u^\mg = w^\mg$ but
there is no $v \in I(\alpha \cap \beta)^*$
with $v^\mg = u^\mg = w^\mg$.
}\label{fig: sketch of 2-cycle}
\end{figure}

A symmetry of $I$ is 
a two sorted map $\varphi = (\varphi_V,\varphi_E)$ s.t.
\[ \src(\varphi_E(e)) = \varphi_V(\src(e)),
\trgt(\varphi_E(e)) = \varphi_V(\trgt(e))
\text{, and }
\varphi_E(e^{-1})  = \varphi_E(e)^{-1}.\]
An $I$-groupoid $\mg$ is \emph{symmetric} if every
symmetry $\varphi$ of $I$ induces an automorphism $\varphi_\mg$
of $\mg$ induced by $\varphi_{\mg}(e^\mg) = \varphi_E(e)^\mg$.
The following theorem is due to Otto \cite{Otto2013}
(also see \cite{Otto2015} for an extended version).
\begin{theorem}\label{ottos theorem}
For every finite $I = (V,E)$ 
there are finite, symmetric, $2$-acyclic $I$-groupoids. 
\end{theorem}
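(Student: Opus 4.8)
The plan is to obtain the desired finite groupoid as a carefully chosen finite quotient of the \emph{free} $I$-groupoid $\mathbb{F}$, whose elements are the reduced walks in $I$. First I would record that $\mathbb{F}$ itself is already symmetric and $2$-acyclic: a symmetry of $I$ permutes edges compatibly with $\src$, $\trgt$ and $(\cdot)^{-1}$, hence permutes reduced walks and induces an automorphism of $\mathbb{F}$; and if $u^{\mathbb{F}} = w^{\mathbb{F}}$ with $u \in I(\alpha)^*$ and $w \in I(\beta)^*$, then the common reduced walk arises from $u$ (respectively $w$) only by cancelling adjacent inverse pairs, so it uses edges from $\alpha$ and from $\beta$ simultaneously, hence from $\alpha\cap\beta$; thus $\mathbb{F}(\alpha)\cap\mathbb{F}(\beta) = \mathbb{F}(\alpha\cap\beta)$. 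The sole defect of $\mathbb{F}$ is that it is infinite as soon as $I$ carries a cycle. So the whole task reduces to finding a groupoid congruence on $\mathbb{F}$ of finite index that neither breaks symmetry nor manufactures a ``$2$-cycle'' as in \Cref{fig: sketch of 2-cycle}.

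To control $2$-acyclicity in a finite quotient I would force that quotient to \emph{remember which edges a morphism uses}, via coordinate homomorphisms. The naive version is the homology map $h \colon \mathbb{F} \to \bigoplus_{[e]} \mathbb{Z}/n$ with one coordinate per edge-pair $\{e,e^{-1}\}$, sending each edge to its signed basis vector; for $g \in \mathbb{F}(\alpha)$ the value $h(g)$ is supported on $\alpha$, so $g \in \mg(\alpha)\cap\mg(\beta)$ forces $h(g)$ to be supported on $\alpha\cap\beta$. This is necessary but not sufficient, since an abelian invariant cannot exhibit an actual $(\alpha\cap\beta)$-walk realizing $g$. I would therefore build the quotient $\mg$ with a filtration of bounded length whose successive layers are elementary abelian (an $\mathbb{F}_2$-linear, nilpotent-of-bounded-class model), arranged so that membership in each subgroupoid $\mg(\gamma)$ is detected layer by layer by coordinates supported on $\gamma$. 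Once such a faithful, layerwise coordinate detection is in place, the argument above upgrades to the genuine conclusion $\mg(\alpha)\cap\mg(\beta) = \mg(\alpha\cap\beta)$; and I expect a filtration of bounded length to suffice, since only pairs $(\alpha,\beta)$ — rather than longer configurations — need to be separated.

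Symmetry I would secure by making every choice in the construction canonical in $I$: the relations cutting $\mathbb{F}$ down to $\mg$ — an $n$-th power on the homology, the vanishing of commutators beyond the chosen layer, and the linear data defining the layers — can all be phrased using only the edge set of $I$ and its involution, with no spanning tree, ordering, or base point. Any symmetry $\varphi$ of $I$ then permutes this data verbatim and descends to an automorphism $\varphi_\mg$. A choice-free construction is the honest route here rather than a mere convenience, because the obvious way to repair a non-symmetric example — passing to a sub-$I$-groupoid of a product over the symmetries of $I$ — need not preserve $2$-acyclicity: a common $(\alpha\cap\beta)$-walk realizing a pair $(g_1,g_2)$ in both factors \emph{simultaneously} may fail to exist even though each component lies in the right subgroupoid.

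The hard part is reconciling the two demands of the second paragraph at once: the coordinate detection must be \emph{faithful} enough that vanishing of the out-of-$\gamma$ coordinates really yields an $(\alpha\cap\beta)$-walk, yet the carrier of all these coordinates must stay \emph{finite}. The free groupoid has faithful detection trivially (it is its own record of edge usage) but is infinite; crude finite quotients are finite but immediately identify distinct reduced walks and so create $2$-cycles. Threading between these — designing the finitely many bounded layers and the $\mathbb{F}_2$-linear generator actions so that every short ``$\alpha$-versus-$\beta$'' discrepancy acts non-trivially on some layer — is the technical core of the theorem, and is exactly the content supplied by Otto's construction.
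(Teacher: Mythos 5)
First, a point of comparison: the paper does not prove this statement at all. It is imported verbatim as a black box from Otto's work (\cite{Otto2013,Otto2015}), which is precisely why it is labelled a theorem ``due to Otto'' and why the surrounding text calls it the key external ingredient. So any genuine proof here would necessarily go beyond what the paper supplies.

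Your proposal, however, is not such a proof; it has a genuine gap, and you name it yourself in the last sentence. The parts you do establish are correct but easy: the free $I$-groupoid of reduced walks is symmetric and $2$-acyclic (cancellation only deletes edges, so a common reduction of an $\alpha$-walk and a $\beta$-walk is an $(\alpha\cap\beta)$-walk), and the problem therefore reduces to finding a finite quotient that stays symmetric and $2$-acyclic. That reduction is a restatement of the theorem, not progress on it. The entire difficulty sits in your second paragraph, where the construction is carried by the phrase ``arranged so that membership in each subgroupoid $\mg(\gamma)$ is detected layer by layer by coordinates supported on $\gamma$'': no layers are defined, no generator actions are specified, and no argument is given that vanishing of out-of-$\gamma$ coordinates actually produces a witnessing walk in $I(\alpha\cap\beta)^*$ rather than merely constraining an abelian (or nilpotent) invariant. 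It is also far from clear that a bounded-class nilpotent quotient can work at all — already for a one-vertex $I$ (the group case) the conditions $\mg(\alpha)\cap\mg(\beta)=\mg(\alpha\cap\beta)$ for all pairs of generator subsets are delicate, and Otto's actual construction is a rather different, combinatorially involved amalgamation over the lattice of generator subsets, not a lower-central-series quotient. Your observation that symmetrizing after the fact (via a product over the symmetries of $I$) can destroy $2$-acyclicity is correct and is indeed one of the real obstacles Otto has to overcome, but identifying an obstacle is not the same as overcoming it. As written, the proposal defers the theorem's content to the very result it is supposed to prove.
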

\subsection*{The main construction}
We describe a construction of a finite
$P$-generated group $G$ that is
strongly compatible
with a given finite, $P$-generated inverse monoid $M$.
The construction proceeds in $4$ `steps':
\begin{enumerate}[label = Step \arabic*:]
\item Let $F$ be a finite $P$-generated group
compatible with $M$ (guaranteed to exist by \Cref{finite compatible groups}).
\item Let $I$ be the Gaifman graph of $F$, i.e.,
the finite multidigraph $I = (F,E)$ where
\begin{align*}
&E = \set{(f,p)}{ f \in F, p \in P} \text{ and }\\
&\src((f,p)) = f,\; \trgt((f,p)) = f p^F,\;
(f,p)^{-1} = (fp^F,p^{-1}).
\end{align*}.
\item Let $\mh$ be a finite, symmetric, $2$-acyclic $I$-groupoid
(guaranteed to exist by Otto's Theorem (\Cref{ottos theorem})).
\item Let $G$ be the $P$-generated group given
as a subgroup of the symmetric group of $\mh$
(here $\mh$ seen as a plain set)
generated by $\set{p^G}{p \in P}$
where 
\[p^G(h) := h(\trgt(h),p)^\mh.\]
\end{enumerate}
Clearly the resulting $P$-generated group $G$ is finite.
It remains to show that $G$ is indeed strongly compatible
with $M$.

In the following we reserve $a,b,c$ to denote elements in $P^*$
and $u,v,w$ to denote elements in $I^*$.

Note that we can think of $I$ as a cover of $P$,
each edge $(f,p)$ in $I$ is canonically labelled by $p \in P$
and every vertex in $I$ is adjacent to exactly one
edge with colour $p$.
This enables us to pass from $I^*$ to $P^*$
by projecting walks $(f_1,p_1)\dots(f_n,p_n)$ to
words $p_1\dots p_n$; we write $\pi$ for this projection.
On the other hand, for a fixed $f \in F$, we can
uniquely lift a word $a$ over $P$ to a walk $u$ in $I$ starting at $f$,
i.e., there is a unique $u \in I^*$ s.t.\ $\src(u) = f$
and $\pi(u) = a$.
So, lifts and projections give us means to translate between $P^*$ and $I^*$.

The following lemma shows that $G$ is strongly compatible
with $M$.
The lemma references some auxiliary statements
that are proved subsequently.
It might be instructive to read the proof
of the lemma once to get an idea about the crucial
steps, then read the proof of the auxiliary lemmas,
and after that read this proof once more.
\begin{lemma}\label{G is an algebraic realisation}
Let $M$ be a finite $P$-generated inverse monoid
and $G$ as above.
Then $G$ is a finite $P$-generated group
strongly compatible with $M$.
\end{lemma}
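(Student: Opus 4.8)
The plan is to unravel the action of $G$ on $\mh$ into a statement about walks, reduce the hypothesis $a^G=c^G$ to a single equation in $\mh$ using symmetry, and then use $2$-acyclicity to manufacture the dominating word. Throughout I write $a,c\in P^*$ for the two given words and $u,w\in I^*$ for their lifts starting at the vertex $1\in F$ of $I$, so that $\pi(u)=a$, $\pi(w)=c$, with $u$ running from $1$ to $a^F$ and $w$ from $1$ to $c^F$; for a vertex $f$ I write $\mathrm{lift}_f a$ for the lift of $a$ starting at $f$.

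First I would compute the action. Unwinding $p^G(h)=h(\trgt(h),p)^\mh$, an easy induction gives $a^G(h)=h\cdot(\mathrm{lift}_{\trgt(h)}a)^\mh$: applying $a^G$ right-multiplies $h$ by the lift of $a$ starting at $\trgt(h)$. As $\mh$ is a groupoid we may cancel $h$, so $a^G=c^G$ holds iff $(\mathrm{lift}_f a)^\mh=(\mathrm{lift}_f c)^\mh$ for every object $f\in F$. Now the left translations $f\mapsto tf$ ($t\in F$) are symmetries of the Gaifman graph $I$, and since $\mh$ is symmetric they induce automorphisms carrying $(\mathrm{lift}_f a)^\mh$ to $(\mathrm{lift}_{tf}a)^\mh$; hence the condition at all $f$ follows from the single instance $f=1$. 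This is the first auxiliary step:
\[ a^G=c^G \quad\Longleftrightarrow\quad u^\mh=w^\mh. \]
In particular $u^\mh=w^\mh$ forces equal targets, so we may assume $a^F=c^F=:g$ (otherwise $a^G\neq c^G$ and there is nothing to prove), and $u,w$ both run from $1$ to $g$.

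Next I would isolate where $M$ enters. The only interface with $M$ is compatibility of $F$ (\Cref{finite compatible groups}): if $d\in P^*$ has $d^F=1$ then $d^M\le 1$, and a short inverse-monoid computation then gives $(xdy)^M\le(xy)^M$. Consequently, deleting a \emph{closed} subwalk from a walk can only raise the $M$-image of its projection, since a closed subwalk projects to a word trivial in $F$. It therefore suffices to find a walk $v$ from $1$ to $g$ with $v^\mh=u^\mh$ such that both $u$ and $w$ reduce to $v$ by successively deleting closed subwalks that are \emph{trivial in $\mh$}: setting $b:=\pi(v)$, the equation $v^\mh=u^\mh$ yields $b^G=a^G=c^G$, while the two reductions yield $a^M\le b^M$ and $c^M\le b^M$. (Closedness of the deleted subwalks secures the $M$-bound; demanding $\mh$-triviality secures the $G$-value.)

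The crux, and what I expect to be the main obstacle, is producing this common reduction from the single equation $u^\mh=w^\mh$ — exactly the point at which $2$-acyclicity must be used as more than a black box. My plan is an induction (on $|u|+|w|$, or on $|\mathrm{Im}(u)\cup\mathrm{Im}(w)|$). When $u$ and $w$ share their first edge one cancels it and recurses; the difficulty is the divergent case, where I would feed $\alpha=\mathrm{Im}(u)$ and $\beta=\mathrm{Im}(w)$ into $2$-acyclicity, $\mh(\alpha)\cap\mh(\beta)=\mh(\alpha\cap\beta)$, to localise the equation $u^\mh=w^\mh$ onto the shared edges and so expose a closed $\mh$-trivial subwalk of $u$ or of $w$ that can be removed while driving both words toward a common form. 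The genuinely hard part is the bookkeeping forcing $u$ and $w$ to reduce to the \emph{same} $v$ rather than merely each to some shortening; I expect this to need an auxiliary confluence/uniqueness statement for $\mh$-reduced walks, again established from $2$-acyclicity. Everything outside this combinatorial core — the action computation, the symmetry reduction, and the inverse-monoid arithmetic turning closed-subwalk deletion into the order $\le$ on $M$ — is routine.
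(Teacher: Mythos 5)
Your first two steps are sound and match the paper: the action formula $a^G(h)=h\cdot(\mathrm{lift}_{\trgt(h)}a)^\mh$ and the reduction of $a^G=c^G$ to the single equation $u^\mh=w^\mh$ via the translation symmetries of $I$ are exactly \Cref{true facts about G}, and the single application of $2$-acyclicity with $\alpha=\mathrm{Im}(u)$, $\beta=\mathrm{Im}(w)$ to produce $v\in I(\alpha\cap\beta)^*$ with $v^\mh=u^\mh=w^\mh$ is also the paper's move. The gap is in the combinatorial core, and you point at it yourself: you require that $u$ and $w$ each reduce to the \emph{same} walk $v$ by successive deletions of closed, $\mh$-trivial subwalks, and you defer the existence of such a common reduction to an unproved confluence statement for $\mh$-reduced walks. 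This is both more than you can extract from $2$-acyclicity (the $v$ it hands you merely uses a subset of the edges common to $u$ and $w$; it may traverse them in an order bearing no relation to the order in which $u$ or $w$ does, so neither walk need be obtainable from it, or reduce to it, by excising closed subwalks) and more than you need.

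The missing idea is the paper's \Cref{true facts about I}: if $u$ and $v$ have the same source and target and every edge of $v$, or its inverse, merely \emph{occurs somewhere} in $u$, then already $\pi(u)^M\le\pi(v)^M$. This is proved by induction on $|v|$: write $v=v'e$, locate an occurrence of $e$ or of $e^{-1}$ in $u$, decompose $u$ around that occurrence, observe that the closed pieces project to words trivial in $F$ and hence to idempotents of $M$ (compatibility of $F$ with $M$ --- the only place $M$ enters, as you correctly identified), and absorb them using the order $\le$. No ordering information, no common reduction, and no confluence are needed; $2$-acyclicity is used exactly once, as a black box, to obtain the edge-set containment. With that lemma in place your outline closes: $v^\mh=u^\mh$ gives $b^G=a^G=c^G$ by your symmetry step, and the edge containment gives $a^M,c^M\le b^M$. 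As written, however, your proposal does not constitute a proof, because the step you flag as the genuinely hard part is precisely the step that is not carried out, and the form in which you propose to carry it out (common reduction by closed-subwalk deletion) is not the right statement to aim for.
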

\begin{proof}
Clearly, $G$ is finite.
We show that $G$ is strongly compatible with $M$.
Let $a,b \in P^*$ with $a^G = b^G$.
We set $u, w \in I^*$ to be the lifts of $a,b$ to $1$.
Then, by \Cref{true facts about G} (i),
$u^\mh = a^G(\id_1)$ and $v^\mh = b^G(\id_1)$.
Thus $u^\mh = v^\mh$.

Set $\alpha = \set{ e,e^{-1} \in E}{\text{$e$ appears in $u$}}$
and $\beta = \set{ e,e^{-1} \in E}{\text{$e$ appears in $w$}}$.
Then $u \in I(\alpha)$, $w \in I(\beta)$,
and $u^\mh = w^\mh$.
By $2$-acyclicity of $\mh$, there is a $v \in I(\alpha \cap \beta)$
s.t.\ $v^\mh = u^\mh = w^\mh$.

We show that $c := \pi(v)$ is as desired, i.e.,
$c^G = a^G = b^G$ and $a^M,b^M \leq c^M$.
Note that $v$ is the lift of $c$ to $1$
and so, again by \Cref{true facts about G} (i),
$v^\mh = c^G(\id_1)$.
As $a^G,b^G$ and $c^G$ agree for one argument, namely $\id_1$,
they have to be equal, according to \Cref{true facts about G} (ii).
By construction of $v$, every edge $e$ that appears
in $v$ also appears in $u$ and in $w$ either
directly as $e$ or as $e^{-1}$. So by \Cref{true facts about I}
we get that $\pi(u)^M,\pi(w)^M \leq \pi(v)^M$
and thus $a^M,b^M \leq c^M$.
\end{proof}

We give now the proofs of the auxiliary lemmas.
\begin{lemma}\label{true facts about I}
Let $u,v \in I^*$ with the same sources
and targets
s.t.\ every edge or its inverse that appears in $v$ also 
appears in $u$.
Then $\pi(u)^M \leq \pi(v)^M$.
\end{lemma}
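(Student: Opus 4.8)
The plan is to argue concretely with partial bijections. By the Wagner--Preston Theorem we regard $M$ as a submonoid of $I(X)$ for a finite set $X$; moreover, since the compatible group produced in \Cref{finite compatible groups} is built in exactly this way, we may assume that $F$ is realised as a subgroup of the symmetric group on $X$ with each $p^F$ a \emph{total} bijection extending the partial bijection $p^M$. The payoff of this setup is that a \emph{defined} application of $p^M$ always agrees with $p^F$, so along any reading of a walk that never breaks off, the (partial) $M$-action and the (total) $F$-action coincide.

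The key observation I would isolate is that definedness of reading a walk is \emph{edge-local}. Fix a start point $z \in X$ and let $s = \src(u) = \src(v)$. I claim that if we read a walk from $z$ and every step is defined, then the point reached at a vertex $f \in F$ of $I$ equals $z\cdot(s^{-1}f)$ (using the total $F$-action on $X$), independently of the path taken to $f$: the prefix read has $F$-value $s^{-1}f$, and, being defined, its $M$-value acts on $z$ as its $F$-value. Consequently a directed edge $(f,p)$ can be traversed from $z$ exactly when $z\cdot(s^{-1}f)\in\mathrm{dom}(p^M)$; call such an edge \emph{$z$-admissible}, so that reading a walk from $z$ is defined iff every edge it traverses is $z$-admissible. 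The crucial point is that $z$-admissibility is invariant under edge inversion: if $z\cdot(s^{-1}f)\in\mathrm{dom}(p^M)$ then its image $z\cdot(s^{-1}fp^F)$ lies in $\mathrm{ran}(p^M)=\mathrm{dom}((p^{-1})^M)$, which is precisely $z$-admissibility of $(f,p)^{-1}$. Hence $z$-admissibility is a property of the underlying undirected edge.

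From here the lemma falls out. Since every edge of $v$ occurs in $u$ up to inversion, and since $z\in\mathrm{dom}(\pi(u)^M)$ means every edge of $u$ is $z$-admissible, every edge of $v$ is $z$-admissible too, giving $z\in\mathrm{dom}(\pi(v)^M)$; thus $\mathrm{dom}(\pi(u)^M)\subseteq\mathrm{dom}(\pi(v)^M)$. Finally, on this common domain the two maps agree: for $z\in\mathrm{dom}(\pi(u)^M)$ both $z\cdot\pi(u)^M$ and $z\cdot\pi(v)^M$ equal $z\cdot(s^{-1}t)$, because $u$ and $v$ share the target $t=\trgt(u)=\trgt(v)$. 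Therefore $\pi(u)^M$ is a restriction of $\pi(v)^M$, i.e.\ $\pi(u)^M\le\pi(v)^M$.

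I expect the main obstacle to be pinning down the edge-locality claim, namely that the intermediate points depend only on the current vertex and not on the history of the walk; everything rests on the inclusion $p^M\subseteq p^F$, which is exactly where the compatible realisation of $F$ is used. It is worth noting that the tempting induction---deleting the last edge of $v$ and appending its inverse to $u$---does not go through directly, since recovering $\pi(u)^M\le\pi(v)^M$ from the inductive hypothesis founders on a mismatch of range idempotents; the edge-local viewpoint is precisely what circumvents this.
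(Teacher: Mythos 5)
Your proof is correct, but it takes a genuinely different route from the paper's. The paper argues syntactically inside $M$, by induction on the length of $v$: it peels off the last edge $e$ of $v = v'e$, locates $e$ or $e^{-1}$ in $u = u_1eu_2$ (resp.\ $u = u_1e^{-1}u_2$), applies the inductive hypothesis to the augmented walk $u_1eu_2u_2^{-1}e^{-1}$ (resp.\ $u_1e^{-1}u_2u_2^{-1}$) against $v'$, and uses only the abstract fact that closed walks have idempotent $M$-value, which follows from $\pi(w)^F = \src(w)^{-1}\trgt(w)$ together with compatibility of $F$ with $M$. In particular the paper's argument works for \emph{any} compatible group $F$ chosen in Step 1 of the construction. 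Your argument is instead semantic: it fixes a Wagner--Preston embedding $M \leq I(X)$ and, crucially, requires $F$ to be the particular compatible group built in \Cref{finite compatible groups}, so that $p^M \subseteq p^F$ as partial maps; only then does ``a defined application of $p^M$ agrees with $p^F$'' hold, and only then is your edge-local notion of $z$-admissibility available. This specialisation is harmless for the main theorem (one is free to make that choice in Step 1), but it should be stated explicitly as part of the construction, since the lemma as written is about an arbitrary compatible $F$. Granting that choice, your steps check out: admissibility is inversion-invariant because $\mathrm{ran}(p^M) = \mathrm{dom}((p^{-1})^M)$ and $p^M \subseteq p^F$; the domain inclusion $\mathrm{dom}(\pi(u)^M) \subseteq \mathrm{dom}(\pi(v)^M)$ follows from the hypothesis on edges; agreement on the common domain follows from $\pi(u)^F = \pi(v)^F = s^{-1}t$; and the conclusion uses that the natural partial order on $M$ is induced from that of $I(X)$. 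What your approach buys is a transparent picture (one partial bijection restricting another) with no idempotent bookkeeping; what the paper's buys is independence from the representation and from the choice of $F$. One small correction to your closing remark: the ``tempting induction'' you dismiss is essentially the one the paper carries out successfully --- the range-idempotent mismatch you worry about is absorbed by applying the hypothesis to $u_1eu_2u_2^{-1}e^{-1}$ rather than to $ue^{-1}$.
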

\begin{proof}
Note that
$\pi(w)^M \in E(M)$ if $\src(w) = \trgt(w)$:
it can be shown by induction that
$\pi(w)^F = \src(w)^{-1}\trgt(w)$ for all $w \in I^*$
(keep in mind that the vertices of $I$ are the elements of $F$).
If now $\src(w) = \trgt(w)$, then
$\pi(w)^F = 1$ and thus $\pi(w)^M \in E(M)$
as $F$ is compatible with $M$

We prove the statement of the lemma by induction
over the length of $v$.
If $|v| = 0$, then $\src(u) = \trgt(u)$ and thus
$\pi(u)^M \leq 1 = \pi(v)^M$.
For the induction step let $e$ be the last edge in $v$,
i.e., $v = v'e$.
Then $e$ or $e^{-1}$ also appears somewhere in $u$.
We distinguish these two cases.
\begin{enumerate}[label = \arabic*.:]
\item $e$ appears in $u$. Then $u$ can be decomposed
into $u = u_1eu_2$ as in the following sketch
\begin{center}
\begin{tikzpicture}[scale = 0.4,dot/.style={circle,draw,fill=black,inner sep=0pt, minimum width=3pt},shorter/.style={shorten >=1mm,shorten <=1mm},]
\def\diam{40cm}
\def\bendiness{5}
\def\bendinessB{80}

\node[dot] (0) at (100:\diam) {};
\node[dot] (1) at (88:\diam) {};
\node[dot] (2) at (80:\diam) {};

\draw (0) edge[->, shorter,bend left=\bendiness] node[above] {$v'$} (1);
\draw (1) edge[->, shorter,bend left=\bendiness] node[above] {$e$}  (2);

\draw (0) edge[->, shorter,bend left=\bendinessB] node[above] {$u_1$} (1);
\draw (2) edge[->, shorter, out=100,in=40,looseness = 80]
node[above] {$u_2$} (2);
\end{tikzpicture}
\end{center}
We see that $\src(u_2) = \trgt(u_2)$ and so $\pi(u_2)^M \in E(M)$.
Note that we can apply the induction hypothesis
to $u_1eu_2u_2^{-1}e^{-1}$ and $v'$, i.e.,
$\pi(u_1eu_2u_2^{-1}e^{-1})^M \leq \pi(v')^M$.
So we get
\begin{align*}
\pi(u_1eu_2)^M &= \pi(u_1)^M\pi(eu_2)^M
= \pi(u_1)^M\pi(eu_2)^M(\pi(eu_2)^M)^{-1}\pi(eu_2)^M\\
&= \pi(u_1eu_2u_2^{-1}e^{-1})^M\pi(eu_2)^M
\leq \pi(v')^M\pi(eu_2)^M\\
&= \pi(v'e)^M\pi(u_2)^M \leq \pi(v'e)
\end{align*}
Note that the last inequality is true
just by the definition of $\leq$.
\item $e^{-1}$ appears in $u$. Then $u$ can be decomposed
into $u = u_1e^{-1}u_2$. A sketch of how $u$ and $v$ decompose
is given here:
\begin{center}
\begin{tikzpicture}[scale = 0.4,dot/.style={circle,draw,fill=black,inner sep=0pt, minimum width=3pt},shorter/.style={shorten >=1mm,shorten <=1mm},]
\def\diam{40cm}
\def\bendiness{5}
\def\bendinessB{90}
\def\bendinessC{50}

\node[dot] (0) at (100:\diam) {};
\node[dot] (1) at (88:\diam) {};
\node[dot] (2) at (80:\diam) {};

\draw (0) edge[->, shorter,bend left=\bendiness] node[above] {$v'$} (1);
\draw (1) edge[->, shorter,bend left=\bendiness] node[above] {$e$}  (2);

\draw (0) edge[->, shorter,bend left=\bendinessB] node[above] {$u_1$} (2);
\draw (1) edge[->, shorter,bend left=\bendinessC] node[above] {$u_2$} (2);
\end{tikzpicture}
\end{center}
We see that $\src(e^{-1}u_2) = \trgt(e^{-1}u_2)$ and
so $\pi(e^{-1}u_2)^M \in E(M)$. Also note that
we can apply the induction hypothesis to $u_1e^{-1}u_2u_2^{-1}$
and $v'$, i.e., $\pi(u_1e^{-1}u_2u_2^{-1})^M \leq \pi(v')^M$.
So we get
\begin{align*}
\pi(u_1e^{-1}u_2)^M &= \pi(u_1)^M\pi(e^{-1}u_2)^M =
\pi(u_1)^M\pi(e^{-1}u_2)^M\pi(e^{-1}u_2)^M\\
&= \pi(u_1)^M\pi(e^{-1}u_2)^M\pi(u_2^{-1}e)^M
= \pi(u_1e^{-1}u_2u_2^{-1})^M\pi(e)^M\\
&\leq \pi(v')^M\pi(e)^M = \pi(v'e)^M.
\end{align*}
\end{enumerate}
In both cases $\pi(u)^M \leq \pi(v)^M$.
\end{proof}

We now want to show that the elements
in $G$ are completely determined
by one pair of value and argument.
This fact uses that $\mh$
is symmetric.

Every $f \in F$ defines a symmetry $\phi_f$ of $I$
whose action on the vertices is given by
$\phi_f(f') = ff'$ and its action on
the edges given by $\phi_f((f',p)) = (ff',p)$
(notationally we do not explicitly distinguish between
the vertex part and the edge part of $\phi$).
Note that $\phi_f((f',p)) = (\phi_f(f'),p)$.
Since $\mh$ is symmetric, $\phi_{f}$ can be extended to a
symmetry $\phi_{f,\mh}$ of $\mh$,
i.e., $\phi_{f,\mh}(u^\mh) := (\phi_f(u))^\mh$
is well-defined.
We give a proof sketch of the following fact:
\begin{align} \phi_{f,\mh}(a^G(h)) = a^G(\phi_{f,\mh}(h))
\tag{$*$}\end{align}
for $a \in P^*$ and $h \in \mh$.
The proof is by induction on the length of $a$,
and at its core lies the statement that that
$\phi_{f,\mh}(p^G(u^\mh)) = a^G(\phi_{f,\mh}(u^\mh))$
for $p \in P$ and $u \in I$.
We show this by proving that both sides are equal to $\phi_f\big(u (\trgt(u),p)\big)^\mh$.
For the left-hand side we get
\begin{align*}
\phi_{f,\mh}\big(p^G(u^\mh)\big) =
\phi_{f,\mh}\big(u^\mh (\trgt(u^\mh),p)^\mh\big)
= \phi_{f,\mh}\big(u^\mh (\trgt(u),p)^\mh\big)
= \phi_f\big(u(\trgt(u),p)\big)^\mh
\end{align*}
and for the right-hand side we get the same
\begin{align*}
p^G\big(\phi_{f,\mh}(u^\mh))
&= p^G\big( \phi_f(u)^\mh \big)
= \phi_f(u)^\mh (\trgt(\phi_f(u)^\mh),p)^\mh
= \phi_f(u)^\mh (\trgt(\phi_f(u)),p)^\mh\\
&= \phi_f(u)^\mh (\phi_f(\trgt(u)),p)^\mh
= \phi_f(u)^\mh \phi_f((\trgt(u),p))^\mh
= \phi_f\big( u(\trgt(u),p) \big)^\mh
\end{align*}

\begin{lemma}\label{true facts about G}
Let $a,b \in P^*$.
Then
\begin{enumerate}[label = (\roman*)]
\item $a^G(\id_1) = u^\mh$, where $u$ is the lift of $a$ to $1$ in $I$,
\item $a^G = b^G$ if $a^G(h) = b^G(h)$ for some $h \in \mh$.
\end{enumerate}
\end{lemma}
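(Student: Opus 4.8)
The plan is to deduce both parts from a single, more informative formula describing how $a^G$ acts on an arbitrary element of $\mh$, namely
\[ a^G(h) = h\cdot w^\mh, \qquad\text{where } w \text{ is the lift of } a \text{ to } \trgt(h), \]
for all $a \in P^*$ and $h \in \mh$. Throughout, composition in the permutation group $G$ is read diagrammatically, so that for $a = a'p$ one has $a^G(h) = p^G\big((a')^G(h)\big)$; this is the convention under which the definition $p^G(h) = h(\trgt(h),p)^\mh$ is consistent with reading a lifted word from left to right. Part (i) is then just the special case $h = \id_1$: since $\trgt(\id_1) = 1$, the walk $w$ is exactly the lift $u$ of $a$ to $1$, and $\id_1\cdot u^\mh = u^\mh$.

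First I would establish the displayed formula by induction on $|a|$. For the empty word both sides equal $h$, since $a^G = \id$ and the lift of the empty word to $\trgt(h)$ is the trivial walk at $\trgt(h)$, which maps to $\id_{\trgt(h)}$. For the step, write $a = a'p$ and apply the induction hypothesis to $a'$, obtaining $(a')^G(h) = h\,(w')^\mh$ where $w'$ is the lift of $a'$ to $\trgt(h)$; in particular $\trgt\big((a')^G(h)\big) = \trgt(w')$. Unwinding the definition of $p^G$ then gives
\[ a^G(h) = (a')^G(h)\,\big(\trgt((a')^G(h)),p\big)^\mh = h\,(w')^\mh\,(\trgt(w'),p)^\mh = h\,\big(w'\,(\trgt(w'),p)\big)^\mh, \]
and $w'\,(\trgt(w'),p)$ is precisely the lift of $a'p = a$ to $\trgt(h)$. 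This closes the induction and proves (i).

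For (ii) I would first record two consequences of the formula. Evaluating it at an identity shows $a^G(\id_t) = w_t^\mh$, where $w_t$ is the lift of $a$ to $t$; feeding this back in yields the determination
\[ a^G(h) = h\cdot a^G\big(\id_{\trgt(h)}\big), \]
so that $a^G$ is completely determined by its restriction to the object-identities $\id_t$, $t \in F$. Next I would use the symmetry identity $(*)$: instantiated at $h = \id_1$ and $f = t$, and using $\phi_{t,\mh}(\id_1) = \id_t$, it reads $a^G(\id_t) = \phi_{t,\mh}\big(a^G(\id_1)\big)$, so all these values are translates of the single value $a^G(\id_1)$ under the automorphisms $\phi_{t,\mh}$ of $\mh$ (which exist because $\mh$ is symmetric). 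Now suppose $a^G(h) = b^G(h)$ and put $t = \trgt(h)$. The determination gives $h\cdot a^G(\id_t) = h\cdot b^G(\id_t)$, and cancelling the invertible groupoid element $h$ on the left yields $a^G(\id_t) = b^G(\id_t)$. Applying $\phi_{t,\mh}^{-1} = \phi_{t^{-1},\mh}$ gives $a^G(\id_1) = b^G(\id_1)$, whence $a^G(\id_s) = \phi_{s,\mh}(a^G(\id_1)) = \phi_{s,\mh}(b^G(\id_1)) = b^G(\id_s)$ for every $s \in F$. Thus $a^G$ and $b^G$ agree on all object-identities and therefore coincide.

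The inductions are routine; the one genuinely delicate point is the interplay in (ii) between the two directions present in $\mh$. The $G$-action moves a groupoid element by right-multiplication along a lift and only alters its target, so $G$ never relates elements with different sources; what lets us compare $a^G$ and $b^G$ across all of $\mh$ is that the left-translation symmetries $\phi_f$ of the Gaifman graph act transitively on objects and, by symmetry of $\mh$, lift to automorphisms that commute with the $G$-action via $(*)$. Reducing everything to the single identity $\id_1$ and then invoking $(*)$ together with cancellation in the groupoid is the crux; the composition-order convention must also be handled carefully, since the opposite choice would produce lifts of the reversed word.
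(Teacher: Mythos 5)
Your proof is correct and takes essentially the same route as the paper: an induction on word length giving an explicit description of $a^G(h)$ in terms of lifts, combined with the symmetry/equivariance identity $(*)$ and cancellation in the groupoid $\mh$. The only cosmetic difference is that your single master formula $a^G(h) = h\cdot w^\mh$ subsumes both of the paper's auxiliary inductions (its stronger form of (i) and the identity $(**)$), and you route the comparison through the object-identities $\id_t$ rather than through the paper's combined formula $a^G(h') = h'\cdot\eta(h^{-1}\cdot a^G(h))$.
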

\begin{proof}
(i) is a consequence of the stronger statement
\[u^\mh = \pi(u)^G(\id_{\src(u)}),\]
for $u \in I^*$. This stronger
statement can be easily proved by induction.

To prove (ii) we note that in general
\begin{align} a^G(h') = h'\cdot h^{-1}\cdot a^G(h)
\;\text{ for all } h,h' \text{ with } \trgt(h) = \trgt(h'),
\tag{$**$}\end{align}
which can be proved easily by induction.
With this we can show that for
$h,h' \in \mh$, $a \in P^*$ and 
$\eta = \phi_{\trgt(h')\trgt(h)^{-1},\mh}$
we have that
\begin{align*}
a^G(h') &\overset{(**)}{=} h' \cdot \id_{\trgt(h')}^{-1} \cdot a^G(\id_{\trgt(h')})
= h' \cdot  a^G(\eta(\id_{\trgt(h)}))\\
&\overset{(*)}{=} h' \cdot  \eta(a^G(\id_{\trgt(h)}))
\overset{(**)}{=} h' \cdot \eta(\id_{\trgt(h)}\cdot h^{-1} \cdot a^G(h))\\
&= h' \cdot \eta(h^{-1} \cdot a^G(h)).
\end{align*}
We can now finish the argument for (ii).
If $a^G(h) = b^G(h)$,
then
$a^G(h') = h'  \eta(h^{-1}  a^G(h)) = 
h'  \eta(h^{-1}  b^G(h)) = b^G(h')$
for every $h' \in \mg$.
\end{proof}

\bibliographystyle{abbrv}
\bibliography{diss.bib}{}

\end{document}